\newtheorem{definition}{Definition}
\newtheorem{remark}{Remark}
\newtheorem{theorem}{Theorem}
\newtheorem{lemma}{Lemma}
\newtheorem{proposition}{Proposition}
 \numberwithin{equation}{section}
\title{Nonlinear Elliptic Equations With Variable Exponents Involving Singular Nonlinearity}
\author{
Hichem Khelifi$^{1}$ and Youssef El hadfi$^{2}$\\
   $^{1}$ Department
of Mathematics and Informatics, University of Algiers, Algiers, Algeria.\\
 2 Street Didouche Mourad Algiers.\\
 Applied Mathematics Laboratory, Badji Mokhtar University-Annaba B.P. 12, Algeria.\\
      \texttt{khelifi.hichemedp@gmail.com}\\
$^{2}$Laboratory LIPIM, National School of Applied Sciences Khouribga\\
Sultan Moulay Slimane University, Morocco\\
  \texttt{yelhadfi@gmail.com}\\}
\begin{document}
\maketitle
\begin{abstract}
 In this paper, we prove the existence and regularity of weak positive solutions for a class of nonlinear elliptic equations with a singular nonlinearity, lower order terms and $L^{1}$ datum in the setting of variable exponent Sobolev spaces. We will prove that the lower order term has some regularizing effects on the solutions. This work generalizes some
results given in \cite{1}.
\end{abstract}

\keywords{ Sobolev spaces with variable exponents, Singular Nonlinearity, Elliptic equation, A given $L^{1}$, Lower order terms.}

\section{Introduction}
\subsection{Introduction of Our Problem}
This work is devoted to the study of the nonlinear elliptic problem
\begin{equation}
\label{2}
\begin{cases}
-\mbox{div}(a(x)\vert \nabla u\vert^{p(x)-2}\nabla u)+b(x)u\vert u\vert^{r(x)-1}=\frac{f}{ u^{\gamma(x)}} ,& \mbox{in}\;\; \Omega , \\
u>0, & \mbox{on}\;\; \Omega, \\
u=0, & \mbox{on}\;\; \partial\Omega,
\end{cases}
\end{equation}
where $\Omega$ is a bounded open subset of $\mathbb{R}^{N}$ ($N\geq2$) with Lipschitz boundary $\partial\Omega$, $f$ is a positive (that is $f(x)\geq0$ and not zero a.e.) function in $L^{1}(\Omega)$, and $p,r : \overline{\Omega}\rightarrow(0,+\infty)$, $\gamma : \overline{\Omega}\rightarrow(0,1)$ are continuous functions and satisfying
\begin{equation}
\label{3}
1<p^{-}:=\inf\limits_{x\in\overline{\Omega}}
p(x)\leq p^{+}:=\sup\limits_{x\in\overline{\Omega}}
p(x)<N,
\end{equation}
\begin{equation}
\label{4}
p(x)-1<r(x),
\end{equation}
\begin{equation}
\label{1}
0<\gamma^{-}:=\inf_{x\in \overline{\Omega}}\gamma(x)\leq
\gamma^{+}:=\sup_{x\in \overline{\Omega}}\gamma(x)<1,\quad   \quad \mbox{and} \quad \vert\nabla\gamma\vert\in L^{\infty}(\Omega)
\end{equation}
where $a(x), b(x)$ are measurable functions verifies for some
positive numbers $\alpha, \beta, \mu, \nu$ the following conditions
\begin{equation}
\label{5}
0<\alpha\leq a(x)\leq\beta,\quad 0<\mu\leq b(x)\leq\nu.
\end{equation}
\par Equations with variable exponents appear in various mathematical models. In some cases, they provide realistic models for the study of natural phenomena in electro-rheological fluids and important applications are related to image processing. We refer the reader to \cite{2,3,4} and the references therein.
\par For costant-exponent cases (i.e., $p(x)= p$, $r(x)= r$ and $\gamma(x)=\gamma$), the existence and regularity of solutions to problem \eqref{2} are studied in \cite{1,7', 14',14''}. They proved that the solution was in $W_{0}^{1,q}(\Omega)$ and $u^{r+\gamma}$ belongs to $L^{1}(\Omega)$, where $q=\frac{pr}{p+1-\gamma}$. The problem was also considered in \cite{5}, when $b(x)=0$ and $\gamma,p$ was a constants with $0\leq\gamma<1$, $f\in L^{m}(\Omega)$ ($m\geq1$). The authors in \cite{5} prove the existence and uniqueness results. If $p(x)=2$ and $\gamma,r$ were constants, the problem \eqref{2} has been treated in \cite{6}.
\par When the lower-order term does not appear in \eqref{2} (i.e., $b(x)=0$) and the exponent $p(x)\equiv p$, the problem \eqref{2}, have been treated in \cite{7}, under the hypothesis $f\in L^{m}(\Omega)$ ($m\geq1$). If $m=1$ and $0<\gamma^{-}\leq \gamma(x)\leq\gamma^{+}<1$ the authors proved that
the solution belongs to $W_{0}^{1,q}(\Omega)$, where $q=\frac{N(p+\gamma^{-}-1)}{N+\gamma^{-}-1}$.
\subsection{ Preliminary Work}
For some preliminary results on the Lebesgue and Sobolev spaces with variable exponent, we only give the definition of $L^{p(.)}(\Omega)$, for more details, see \cite{8,9} or monographs \cite{10,11}.  For an open $\Omega\subset\mathbb{R}^N$, let $p:\Omega \rightarrow[1,+\infty)$ be a measurable function such that
$$1<p^-=\mathrm{ess\,inf}\,p,\quad p^{+}=\mathrm{ess\,sup}\,p<+\infty.$$
We define the Lebesgue space with variable exponent $L^{p(\cdot)}(\Omega)$ to consist of all
measurable functions $u :\Omega\rightarrow\mathbb{R}$ for which the convex modular
$$\rho_{p(\cdot)}(u)=\int\limits_{\Omega}|u|^{p(x)}dx,$$
is finite. The expression
$$\|u\|_{p(\cdot)}:=\|u\|_{L^{p(\cdot)}(\Omega)}=\inf\left\{\lambda>0,\ \rho_{p(\cdot)}\left(\frac{u}{\lambda}\right)\leq 1\right\},$$
defines a norm in $L^{p(\cdot)}(\Omega)$, called the Luxemburg norm, and $\left(L^{p(\cdot)}(\Omega),\ \|u\|_{p(\cdot)}\right)$ is a uniformly convex Banach space. Its dual space is isomorphic to $L^{p'(\cdot)}(\Omega)$,
where $\frac{1}{p(x)}+\frac{1}{p'(x)}=1$. For all $u\in L^{p(\cdot)}(\Omega)$ and $v\in L^{p'(\cdot)}(\Omega)$, the H\"{o}lder type inequality
$$\left|\int\limits_{\Omega}uvdx\right|\leq\left(\frac{1}{p^{-}}+\frac{1}{p'^{-}}\right)\|u\|_{p(\cdot)}\|v\|_{p'(\cdot)}\leq2\|u\|_{p(\cdot)}\|v\|_{p'(\cdot)},$$
holds true. We define the Sobolev space with variable exponent
$$W^{1,p(\cdot)}(\Omega) = \left\{u \in  L^{p(\cdot)}(\Omega)\ \text{and} \ |\nabla u|\in L^{p(\cdot)}(\Omega)\right\},$$
which is equipped with the following norm
 $$\|u\|_{1,p(\cdot)}=\|u\|_{W^{1,p(\cdot)}(\Omega)}=\|u\|_{p(\cdot)}+\|\nabla u\|_{p(\cdot)}.$$
The space $\left(W^{1,p(\cdot)}(\Omega),\|u\|_{1,p(\cdot)}\right)$ is a reflexive Banach space.
Next, we define also
$$W_{0}^{1,p(\cdot)}(\Omega)=\left\{u \in  W^{1,p(\cdot)}(\Omega),\ u=0 \ \text{on}\ \partial\Omega \right\},$$
endowed with the norm $\|.\|_{1,p(\cdot)}$.\\
 The space $W_{0}^{1,p(.)}(\Omega)$ is separable and reflexive provided that with $1<p^{-}\leq p^{+}<\infty$.
\par An important role in manipulating the generalized Lebesgue and Sobolev spaces is played
by the modular $\varrho_{p(.)}(\Omega)$ of the space $L^{p(.)}(\Omega)$. We have the following result
\begin{proposition}[\cite{10}]\label{pro1}
 If $(u_n), u\in L^{p(\cdot)}(\Omega)$ and $p^{+} < +\infty$, then the
following properties hold true:
\begin{itemize}
  \item[(i)] $\min\left(\rho_{p(\cdot)}(u)^{\frac{1}{p^{+}}},\rho_{p(\cdot)}(u)^{\frac{1}{p^{-}}}\right)\leq\|u\|_{p(\cdot)}
      \leq\max\left(\rho_{p(\cdot)}(u)^{\frac{1}{p^{+}}},\rho_{p(\cdot)}(u)^{\frac{1}{p^{-}}}\right),$
  \item[(ii)] $\min\left(\|u\|_{p(\cdot)}^{p^{-}},\|u\|_{p(\cdot)}^{p^{+}}\right)\leq\rho_{p(\cdot)}(u)
      \leq\max\left(\|u\|_{p(\cdot)}^{p^{-}},\|u\|_{p(\cdot)}^{p^{+}}\right),$
  \item[(iii)]$ \|u\|_{p(\cdot)}\leq\rho_{p(\cdot)}(u)+1,$
\end{itemize}
\end{proposition}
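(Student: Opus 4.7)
\pr
The plan is to prove (ii) first by exploiting the unit-ball property of the Luxemburg norm, then derive (i) and (iii) as simple consequences. We may assume $u\not\equiv 0$, otherwise every quantity vanishes and the inequalities are trivial.

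The key starting point is the observation that the mapping $\lambda\mapsto \rho_{p(\cdot)}(u/\lambda)$ is continuous on $(0,\infty)$, strictly decreasing, tends to $+\infty$ as $\lambda\to 0^+$ and to $0$ as $\lambda\to\infty$ (since $p^+<\infty$ by hypothesis). Therefore the infimum in the definition of the Luxemburg norm is attained, and one has the identity
$$\rho_{p(\cdot)}\!\left(\frac{u}{\|u\|_{p(\cdot)}}\right)=1.$$
I would establish this explicitly before proceeding, as it is the crucial bridge between modular and norm.

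Next, I would record the elementary pointwise inequalities obtained by comparing $\lambda^{p(x)}$ with $\lambda^{p^-}$ and $\lambda^{p^+}$. For $\lambda\geq 1$ one has $\lambda^{p^-}\leq \lambda^{p(x)}\leq \lambda^{p^+}$, while for $0<\lambda\leq 1$ the inequalities reverse. Integrating $|u(x)|^{p(x)}/\lambda^{p(x)}$ over $\Omega$ yields the sandwich
$$\lambda^{-p^+}\rho_{p(\cdot)}(u)\leq \rho_{p(\cdot)}(u/\lambda)\leq \lambda^{-p^-}\rho_{p(\cdot)}(u)\qquad(\lambda\geq 1),$$
and the analogous reversed version for $0<\lambda\leq 1$. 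Setting $\lambda=\|u\|_{p(\cdot)}$ and using $\rho_{p(\cdot)}(u/\lambda)=1$, a case distinction on whether $\|u\|_{p(\cdot)}\geq 1$ or $\leq 1$ gives, in the first case, $\|u\|_{p(\cdot)}^{p^-}\leq \rho_{p(\cdot)}(u)\leq \|u\|_{p(\cdot)}^{p^+}$, and in the second case the reversed chain. Taking $\min$ and $\max$ on both sides produces (ii) in a single unified statement.

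Statement (i) follows from (ii) by solving for $\|u\|_{p(\cdot)}$: in each of the two cases the extremes of the chain are monotone powers, so extracting the appropriate roots and again collecting into $\min/\max$ yields (i). Finally, for (iii) I would argue: if $\|u\|_{p(\cdot)}\leq 1$ the inequality is trivial since $\rho_{p(\cdot)}(u)\geq 0$; if $\|u\|_{p(\cdot)}>1$, then (ii) gives $\|u\|_{p(\cdot)}^{p^-}\leq \rho_{p(\cdot)}(u)$, whence (using $p^-\geq 1$ and $\rho_{p(\cdot)}(u)\geq 1$) $\|u\|_{p(\cdot)}\leq \rho_{p(\cdot)}(u)^{1/p^-}\leq \rho_{p(\cdot)}(u)\leq \rho_{p(\cdot)}(u)+1$.

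The main obstacle is nothing deep, but rather keeping the book-keeping of the two regimes $\lambda\lessgtr 1$ straight, since the monotonicity of $\lambda\mapsto\lambda^{p(x)}$ flips there; the attainment of the infimum (the unit-ball property) should be stated carefully, as it is where the assumption $p^+<\infty$ is quietly used.
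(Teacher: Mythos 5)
Your proof is correct. Note that the paper does not prove this proposition at all: it is quoted from the reference of Diening--Harjulehto--H\"ast\"o--R\r{u}\v{z}i\v{c}ka, and your argument is essentially the standard one given there, resting on the unit-ball identity $\rho_{p(\cdot)}\bigl(u/\|u\|_{p(\cdot)}\bigr)=1$ for $u\not\equiv 0$ and the two-regime comparison of $\lambda^{p(x)}$ with $\lambda^{p^{-}}$ and $\lambda^{p^{+}}$. The only points worth writing out explicitly are the ones you flag yourself: that $p^{+}<\infty$ guarantees $\rho_{p(\cdot)}(u)<\infty$ for every $u\in L^{p(\cdot)}(\Omega)$, so that dominated convergence yields continuity of $\lambda\mapsto\rho_{p(\cdot)}(u/\lambda)$ and its decay to $0$ as $\lambda\to\infty$, while $u\not\equiv 0$ gives the blow-up as $\lambda\to 0^{+}$ and strict monotonicity, hence attainment of the infimum; with these in place your case analysis for (ii), the extraction of roots for (i) (using that $\|u\|_{p(\cdot)}\gtrless 1$ iff $\rho_{p(\cdot)}(u)\gtrless 1$), and the two-case argument for (iii) are all sound.
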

\par Next, we recall some embedding results regarding variable exponent
Lebesgue-Sobolev spaces. If $p,\theta : \Omega\rightarrow (1,+\infty)$ are Lipschitz continuous function satisfying \eqref{3} and $p(x)\leq \theta(x)\leq p^{*}(x)$ for any $x\in \Omega$, where $p^{*}(x)=\frac{Np(x)}{N-p(x)}$, then there exists a compact embedding
\begin{equation}
\label{03}
W^{1,p(.)}(\Omega)\hookrightarrow\hookrightarrow L^{\theta(.)}(\Omega)\hookrightarrow L^{\theta^{-}}(\Omega),
\end{equation}
where $\theta^{-}=\inf\limits_{x\in \overline{\Omega}}\theta(x)$.
\subsection{Statement of main result}
\label{S.results}
\par We use the following definition of the weak solutions.
\begin{definition}
\label{defn1}
\it  Let $f\in L^{1}(\Omega)$. A  function $u\in W_{0}^{1,1}(\Omega)$ is a weak solution to problem \eqref{2}, if
\begin{align*}
&\forall \omega\subset\subset\Omega,\;\; \exists c_{\omega}>0\quad\mbox{such that}\quad u\geq c_{\omega}\;\; \mbox{a.e. in}\;\; \omega,\quad u^{r(x)}\in L^{1}(\Omega),
\end{align*}
and
\begin{equation}
\label{6}
\int_{\Omega}a(x)\vert \nabla u\vert^{p(x)-2}\nabla u.\nabla\varphi dx+\int_{\Omega}b(x)
 u^{r(x)}\varphi dx=\int_{\Omega} \frac{f\varphi}{u^{\gamma(x)}}dx,
\end{equation}
for every $\varphi\in C^1_0(\Omega)$.
 \end{definition}
In this paper we will show the following result.
\begin{theorem}
\label{thm0}
\it Suppose that assumptions \eqref{3}-\eqref{1} hold. Let $f\in L^{1}(\Omega)$ be a positive function ($f(x)\geq0$ and
not zero a.e.). Assume that
\begin{equation}
\label{7}
 p(x)>1+\frac{1-\gamma(x)}{r(x)}.
\end{equation}
Then, the problem \eqref{2} has at least one weak solution $u\in W_{0}^{1,q(.)}(\Omega)$, with
\begin{equation}
\label{8}
q(x)=\frac{p(x)}{1+\frac{1-\gamma(x)}{r(x)}}.
\end{equation}
Moreover $u^{r(x)+\gamma(x)}$ belongs to $L^{1}(\Omega)$.
\end{theorem}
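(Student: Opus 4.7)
\bigskip
\noindent\textbf{Proof proposal.}

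The plan is to follow the standard approximation scheme of Boccardo–Orsina, adapted to the variable-exponent setting. I would first replace the singular datum by a non-singular one: set $f_n = T_n(f) = \min\{f,n\}$ and consider the approximate problems
\[
-\mathrm{div}(a(x)|\nabla u_n|^{p(x)-2}\nabla u_n)+b(x)u_n|u_n|^{r(x)-1}=\frac{f_n}{(u_n^+ + 1/n)^{\gamma(x)}}\quad \text{in }\Omega,\qquad u_n=0\;\text{on }\partial\Omega.
\]
For each fixed $n$, the right-hand side is bounded and non-negative, so a Schauder fixed-point argument applied to the Leray–Lions operator associated with $-\mathrm{div}(a(x)|\nabla\cdot|^{p(x)-2}\nabla\cdot)+b(x)|\cdot|^{r(x)-1}(\cdot)$ produces a non-negative solution $u_n\in W_0^{1,p(\cdot)}(\Omega)\cap L^\infty(\Omega)$. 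A comparison argument (testing the difference of two consecutive equations with $(u_n-u_{n+1})^+$) then gives the monotonicity $0\le u_n\le u_{n+1}$, and the strong maximum principle applied to $u_1$ produces, for every $\omega\Subset\Omega$, a constant $c_\omega>0$ with $u_n\ge u_1\ge c_\omega$ in $\omega$. This both takes care of the positivity requirement in Definition~\ref{defn1} and shows that the singular term $f_n/(u_n+1/n)^{\gamma(x)}$ is locally bounded uniformly in $n$.

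Next I would derive the two key a priori estimates. Using $u_n$ itself as a test function and dropping the non-negative gradient term on the left gives
\[
\int_\Omega b(x)\,u_n^{r(x)+1}\,dx \le \int_\Omega f_n\,u_n^{1-\gamma(x)}\,dx .
\]
Splitting $f_n u_n^{1-\gamma(x)} = f_n^{\,(r(x)+\gamma(x))/(r(x)+1)}\cdot\bigl(f_n^{\,1/(r(x)+1)}u_n^{1-\gamma(x)}\bigr)$ and applying Young's inequality with exponents $(r(x)+1)/(1-\gamma(x))$ and $(r(x)+1)/(r(x)+\gamma(x))$ (this is where \eqref{7} and the lower bound $\mu\le b$ enter) absorbs the $u_n^{r(x)+1}$ term into the left-hand side. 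Since $f\in L^1(\Omega)$ and $(r(x)+\gamma(x))/(r(x)+1)\le 1$, this yields a uniform bound
\[
\int_\Omega u_n^{r(x)+\gamma(x)}\,dx\le C,
\]
which also gives $b(x)u_n^{r(x)}$ bounded in a space slightly better than $L^1$. For the gradient estimate, I would use the Hölder-type decomposition
\[
|\nabla u_n|^{q(x)}=\Bigl(\dfrac{|\nabla u_n|^{p(x)}}{(u_n+1)^{1+\theta(x)}}\Bigr)^{q(x)/p(x)}\,(u_n+1)^{(1+\theta(x))q(x)/p(x)},
\]
with $\theta(x)=\gamma(x)$ chosen so that the second factor is controlled by $u_n^{r(x)+\gamma(x)}$ plus $1$ (using $q(x)$ as in \eqref{8}). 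Testing the approximate equation with $((u_n+1)^{1-\gamma(x)}-1)/(1-\gamma(x))$ — a variant of the $u_n^{1-\gamma(x)}$ test function compatible with the variable exponent — produces, after controlling the $\nabla\gamma$ remainder by \eqref{1} and an integration by parts, a uniform bound for $\int_\Omega \dfrac{|\nabla u_n|^{p(x)}}{(u_n+1)^{1+\gamma(x)}}\,dx$. Combining this with the modular-norm equivalence of Proposition~\ref{pro1} gives the desired bound of $u_n$ in $W_0^{1,q(\cdot)}(\Omega)$.

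With the uniform estimates in hand, the compact embedding \eqref{03} applied to $q(\cdot)$ provides $u_n\to u$ strongly in $L^{q(\cdot)}(\Omega)$ and a.e.\ in $\Omega$, with $u\in W_0^{1,q(\cdot)}(\Omega)$ and $u\ge c_\omega$ on $\omega\Subset\Omega$. To pass to the limit in the quasilinear principal part I would invoke the almost-everywhere convergence of the gradients $\nabla u_n\to\nabla u$ — proved by the standard Boccardo–Murat trick, testing with $T_k(u_n-u)$, exploiting the local lower bound to control the singular term, and using the monotonicity of $\xi\mapsto a(x)|\xi|^{p(x)-2}\xi$. Fatou's lemma on $\int b(x)u_n^{r(x)+\gamma(x)}$ preserves the $L^1$-bound in the limit, giving $u^{r(x)+\gamma(x)}\in L^1(\Omega)$. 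The singular term $f_n/(u_n+1/n)^{\gamma(x)}$ converges a.e.\ to $f/u^{\gamma(x)}$ and is dominated, for $\varphi\in C_0^1(\Omega)$ supported in $\omega$, by $f|\varphi|/c_\omega^{\gamma^-}\in L^1$, so dominated convergence delivers the weak formulation \eqref{6}.

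The main obstacle, I expect, is the gradient estimate in $W_0^{1,q(\cdot)}(\Omega)$: the variable exponents force me to handle the $\nabla\gamma$ and $\nabla p$ error terms that arise from differentiating powers of $u_n$, and to convert modular bounds into norm bounds via Proposition~\ref{pro1} while keeping careful track of whether the modular is above or below one. The hypothesis \eqref{7} is precisely what makes the Young-type absorption step succeed and guarantees $q(x)>1$ everywhere, which is needed for the embedding \eqref{03} and the Leray–Lions structure of the limit equation.
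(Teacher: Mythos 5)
Your overall architecture (truncated approximation, Schauder fixed point, monotonicity of $u_n$ via $(u_n-u_{n+1})^+$, the strong maximum principle giving $u_n\ge c_\omega$, a.e.\ convergence of gradients and the final limit passage by Vitali and dominated convergence) coincides with the paper. The genuine gap is in the two a priori estimates, which are the heart of the result. Testing with $u_n$ cannot produce the bound on $\int_\Omega u_n^{r(x)+\gamma(x)}$ when $f$ is only in $L^1(\Omega)$: your splitting $f_n u_n^{1-\gamma}=f_n^{(r+\gamma)/(r+1)}\bigl(f_n^{1/(r+1)}u_n^{1-\gamma}\bigr)$ is not even an identity (the exponents of $f_n$ add up to $\frac{r+\gamma+1}{r+1}>1$), and with the corrected splitting, Young's inequality with exponents $\frac{r+1}{1-\gamma}$ and $\frac{r+1}{r+\gamma}$ gives $f_n u_n^{1-\gamma}\le \frac{1-\gamma}{r+1}\,f_n u_n^{r+1}+\frac{r+\gamma}{r+1}\,f_n$, and the term $f_n u_n^{r+1}$ cannot be absorbed by $\mu\int_\Omega u_n^{r+1}$ since $f_n$ is not uniformly bounded; any variant that puts the full power $r+1$ on $u_n$ alone forces $f$ to appear with an exponent strictly larger than $1$, which $L^1$ data do not allow. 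The same obstruction destroys your gradient estimate: with the test function $\bigl((u_n+1)^{1-\gamma(x)}-1\bigr)/(1-\gamma(x))$ the right-hand side behaves like $\int_\Omega f\,u_n^{1-2\gamma(x)}$, which is not controlled by $\|f\|_{L^1(\Omega)}$ wherever $\gamma(x)<\tfrac12$; moreover that test function produces the weight $(u_n+1)^{\gamma(x)}$, not $(u_n+1)^{1+\gamma(x)}$, and with the weight $1+\gamma$ your own Young decomposition closes only for $q\le \frac{p(r+\gamma)}{r+1+2\gamma}$, which is strictly below the exponent \eqref{8} you must reach.

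The missing idea, which is the paper's key step (Lemma \ref{lem4}), is to test \eqref{Pn} with $\varphi=(u_n+1)^{\gamma(x)}-1$. Its growth matches the singularity exactly, so the right-hand side $\int_\Omega f_n\frac{(u_n+1)^{\gamma(x)}-1}{(u_n+1/n)^{\gamma(x)}}$ is bounded by $C\|f\|_{L^1(\Omega)}$ with no absorption needed; the lower-order term then yields the uniform bound on $\int_\Omega (u_n+1)^{r(x)+\gamma(x)}$ (this is precisely the regularizing effect of $b(x)u|u|^{r(x)-1}$), while the principal part yields $\int_\Omega\frac{|\nabla u_n|^{p(x)}}{(1+u_n)^{1-\gamma(x)}}\le C$ once the variable-exponent remainder $\nabla\gamma\,(u_n+1)^{\gamma(x)}\ln(1+u_n)|\nabla u_n|^{p(x)-1}$ is absorbed via Young using \eqref{1} and the boundedness of $(1+t)^{p(x)-1-r(x)-c}(\ln(1+t))^{p(x)}$ guaranteed by \eqref{4}. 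Finally $|\nabla u_n|^{q(x)}$ is estimated by Young against these two quantities with the matching condition $(1-\gamma(x))\frac{q(x)}{p(x)-q(x)}=r(x)$, which is exactly \eqref{8}. With this test function in place of yours, the rest of your outline (equiintegrability of $u_n^{r(x)}$ and of $|\nabla u_n|^{q(x)}$, strong convergence of truncates, passage to the limit) is the same as in the paper.
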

\begin{remark}
\label{rem3}\qquad\quad
\begin{description}
  \item[$\bullet$] The assumption \eqref{1} implies $1<q(.)<p(.).$
 \item[$\bullet$]The assumption \eqref{4} implies $q(.)>p(.)-1.$
\end{description}
\end{remark}
\par In order to prove this result, we will work by approximation, "truncating" the singular term $\frac{1}{u^{\gamma(x)}}$ so that it becomes not singular at the origin. We will get some a priori estimates on the solutions $u_{n}$ of the
approximating problems, which will allow us to pass to the limit and
find a solution to problem \eqref{2}.
\section{Approximating problems}
\par Hereafter, we denote by $T_{k}$ the truncation function at the level $k>0$, defined by $T_{k}(s)=\max\{-k,\min\{s,k\}\}$ for every $s\in\mathbb{R}$.
\par Let $(f_{n})$ ($f_{n}>0$) be a sequence of bounded functions defined in $\Omega$ which converges to $f>0$ in $L^{1}(\Omega)$, and which verifies the inequalities $f_{n}\leq n$ and $f_{n}\leq f$ for every $n\geq1$ (for example $f_{n}=T_{n}(f)$). Consider the following approximate equation
\begin{align}
\label{09}
\begin{cases}
-\mbox{div}(a(x)\vert \nabla u_{n}\vert^{p(x)-2}\nabla u_{n})+b(x)u_{n}\vert u_{n}\vert^{r(x)-1}
=\frac{f_{n}}{\left(  u_{n}+\frac{1}{n}\right)^{\gamma(x)}} ,& \mbox{in}\;\; \Omega , \\
u_{n}=0, & \mbox{on}\;\; \partial\Omega.
\end{cases}
\end{align}
\begin{theorem}
\label{thm1}
\it Let $f\in L^{1}(\Omega)$, and let $r,p : \overline{\Omega}\rightarrow(1,+\infty)$, $\gamma : \overline{\Omega}\rightarrow(0,1)$ are continuous functions. Assume that \eqref{3} and \eqref{5} holds true. Then the problem \eqref{09} has a nonnegative solution $u_{n}\in W_{0}^{1,p(.)}(\Omega)$.
\end{theorem}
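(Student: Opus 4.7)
The plan is to apply Schauder's fixed-point theorem. The key observation is that, for fixed $n$, the right-hand side of \eqref{09} is no longer singular: whenever $u\geq 0$ one has $0\leq \frac{f_n}{(u+1/n)^{\gamma(x)}}\leq n\cdot n^{\gamma^+}=n^{1+\gamma^+}$, so the source term is uniformly bounded in $L^\infty(\Omega)$, which removes the difficulty coming from the singularity.

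Concretely, for every $v\in L^{p(\cdot)}(\Omega)$ I would let $u=\Phi(v)\in W_0^{1,p(\cdot)}(\Omega)$ denote the unique weak solution of the decoupled, non-singular problem
\begin{align*}
-\mbox{div}\bigl(a(x)|\nabla u|^{p(x)-2}\nabla u\bigr) + b(x) u|u|^{r(x)-1} = \frac{f_n}{(|v|+1/n)^{\gamma(x)}} \text{ in } \Omega, \qquad u=0 \text{ on } \partial\Omega.
\end{align*}
Existence and uniqueness of $\Phi(v)$ are standard: the left-hand side defines a bounded, strictly monotone, coercive and hemicontinuous operator on $W_0^{1,p(\cdot)}(\Omega)$ (monotonicity and coercivity come from \eqref{5} together with the strict monotonicity of $\xi\mapsto|\xi|^{p(x)-2}\xi$ and of $s\mapsto s|s|^{r(x)-1}$), the right-hand side belongs to $L^\infty(\Omega)$ with norm bounded independently of $v$, and the Minty--Browder surjectivity theorem therefore applies.

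Testing with $u$ itself and using \eqref{5}, Proposition \ref{pro1} and the embedding \eqref{03} would yield an estimate of the form
\begin{align*}
\|u\|_{1,p(\cdot)} + \int_\Omega |u|^{r(x)+1} dx \leq C(n),
\end{align*}
independent of $v$. Hence $\Phi$ sends $L^{p(\cdot)}(\Omega)$ into a fixed ball of $W_0^{1,p(\cdot)}(\Omega)$, which by \eqref{03} embeds compactly into $L^{p(\cdot)}(\Omega)$. Continuity of $\Phi$ on $L^{p(\cdot)}(\Omega)$ would be verified along sequences $v_k\to v$: by dominated convergence the frozen sources converge strongly in every $L^q(\Omega)$, the weak limit $\bar u$ of $u_k=\Phi(v_k)$ in $W_0^{1,p(\cdot)}(\Omega)$ is identified with $\Phi(v)$ through uniqueness, and a standard subsequence argument upgrades the convergence to the whole sequence. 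Schauder's theorem then produces a fixed point $u_n=\Phi(u_n)$ solving \eqref{09}. To conclude $u_n\geq 0$, I would test with $\min(u_n,0)\in W_0^{1,p(\cdot)}(\Omega)$: the elliptic and absorption contributions on $\{u_n<0\}$ are both nonnegative while the right-hand side is nonpositive because $f_n\geq 0$, forcing $u_n^-\equiv 0$.

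I expect the main obstacle to be the identification of the weak limit inside the quasilinear principal part $-\mbox{div}(a(x)|\nabla u_k|^{p(x)-2}\nabla u_k)$, which is needed to verify the continuity of $\Phi$. Weak convergence of $\nabla u_k$ in $L^{p(\cdot)}(\Omega)$ alone is insufficient, and I would overcome this by testing with $u_k-\bar u$, exploiting the monotonicity of $\xi\mapsto|\xi|^{p(x)-2}\xi$ together with the strong convergence of $u_k$ in $L^{p(\cdot)}(\Omega)$, and extracting almost-everywhere convergence of the gradients in the Boccardo--Murat spirit so that the nonlinear term passes to the limit.
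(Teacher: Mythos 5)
Your proposal is correct and follows essentially the same route as the paper: freeze the (now non-singular, $L^\infty$-bounded) right-hand side, solve the decoupled monotone problem by Minty--Browder, obtain a $v$-independent a priori bound, apply Schauder's fixed point theorem via compactness of the Sobolev embedding, and test with $\min(u_n,0)$ for nonnegativity. The only difference is cosmetic (you run the fixed point in $L^{p(\cdot)}(\Omega)$ rather than in a ball of $L^{p^{*}(\cdot)}(\Omega)$, and you spell out the continuity of the solution map, which the paper dismisses as easy), so no further comment is needed.
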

\begin{lemma}\cite{14}
\label{lem1}
\it Suppose that the hypotheses of Theorem \ref{thm1} are satisfied. Then there exists at least one solution $u_{n}\in W_{0}^{1,p(.)}(\Omega)\cap L^{\infty}(\Omega)$ to the problem \eqref{09} in the sense that
\begin{align}
\label{10}
&\int_{\Omega}a(x)\vert \nabla u_{n}\vert^{p(x)-2}\nabla u_{n}.\nabla\varphi+\int_{\Omega}b(x)u_{n}
\vert u_{n}\vert^{r(x)-1}\varphi=\int_{\Omega}\frac{f_{n}}{\left(  u_{n}+\frac{1}{n}\right)^{\gamma(x)}}\varphi,
\end{align}
for every $\varphi\in W_{0}^{1,p(.)}(\Omega)\cap L^{\infty}(\Omega)$.
\end{lemma}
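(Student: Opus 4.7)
The natural approach is Schauder's fixed point theorem. For $v \in L^{p(\cdot)}(\Omega)$, freeze the singular denominator and consider the auxiliary Dirichlet problem
$$-\mbox{div}\bigl(a(x)|\nabla u|^{p(x)-2}\nabla u\bigr)+b(x)u|u|^{r(x)-1}=G_{v}(x)\quad\text{in }\Omega,\qquad u=0\text{ on }\partial\Omega,$$
where $G_{v}(x):=f_{n}(x)/(|v(x)|+1/n)^{\gamma(x)}$. Since $f_{n}\le n$ and $|v|+1/n\ge 1/n$, one gets $\|G_{v}\|_{L^{\infty}(\Omega)}\le n^{1+\gamma^{+}}$, uniformly in $v$. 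Call the (unique) solution $u=\mathcal{S}(v)$; then any fixed point of $\mathcal{S}$ solves \eqref{09}.

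\textbf{Solvability of the frozen problem.} The operator $\mathcal{A}:W_{0}^{1,p(\cdot)}(\Omega)\to(W_{0}^{1,p(\cdot)}(\Omega))^{*}$ associated with the left-hand side is strictly monotone (the $p(x)$-Laplacian for $p^{-}>1$, combined with the monotone map $s\mapsto s|s|^{r(x)-1}$), hemicontinuous, and coercive by \eqref{5} and Proposition \ref{pro1}(ii). The Browder-Minty theorem yields existence and uniqueness of $\mathcal{S}(v)\in W_{0}^{1,p(\cdot)}(\Omega)$. To upgrade to $L^{\infty}$, I would apply a Stampacchia truncation: test the frozen equation with $(u-k)_{+}$ for $k>0$, drop the non-negative lower-order term, and iterate using the Sobolev embedding \eqref{03} together with Proposition \ref{pro1}; this produces $\|\mathcal{S}(v)\|_{L^{\infty}(\Omega)}\le C(n)$ independently of $v$.

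\textbf{Fixed point and sign.} Using $u=\mathcal{S}(v)$ as its own test function shows that $\mathcal{S}(L^{p(\cdot)}(\Omega))$ is bounded in $W_{0}^{1,p(\cdot)}(\Omega)$, hence relatively compact in $L^{p(\cdot)}(\Omega)$ by \eqref{03}. Continuity of $\mathcal{S}$ in the $L^{p(\cdot)}(\Omega)$ topology is obtained by taking $v_{k}\to v$ strongly, deducing $G_{v_{k}}\to G_{v}$ in $L^{\infty}$ by dominated convergence, extracting a weak $W_{0}^{1,p(\cdot)}$-limit of $\mathcal{S}(v_{k})$, and identifying it through a Minty-style monotonicity trick adapted to the variable-exponent setting. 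Schauder's theorem on a sufficiently large closed ball then provides a fixed point $u_{n}\in W_{0}^{1,p(\cdot)}(\Omega)\cap L^{\infty}(\Omega)$ satisfying \eqref{10}. To obtain non-negativity, insert $\varphi=-u_{n}^{-}\in W_{0}^{1,p(\cdot)}(\Omega)\cap L^{\infty}(\Omega)$ into \eqref{10}: the right-hand side is non-positive (as $f_{n}\ge 0$), while the gradient term equals $-\int a(x)|\nabla u_{n}^{-}|^{p(x)}\le 0$ and the lower-order term equals $-\int b(x)(u_{n}^{-})^{r(x)+1}\le 0$, forcing $u_{n}^{-}\equiv 0$.

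\textbf{Main obstacle.} The delicate step is the continuity of $\mathcal{S}$ in the variable-exponent framework: identifying the weak limit of $a(x)|\nabla u_{k}|^{p(x)-2}\nabla u_{k}$ requires a Minty trick inside a Musielak-Orlicz-type space, and one must simultaneously pass to the limit in the non-monotone-in-appearance term $b(x)u_{k}|u_{k}|^{r(x)-1}$, relying on the uniform $L^{\infty}$ bound to invoke Lebesgue dominated convergence.
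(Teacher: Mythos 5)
Your proposal follows essentially the same route as the paper: freeze the singular denominator to get a non-singular auxiliary problem, solve it uniquely by Browder--Minty, obtain an a priori bound by testing with the solution itself, use the compact Sobolev embedding to make the solution map compact on a ball, apply Schauder, prove non-negativity by testing with the negative part, and get $L^{\infty}$ regularity via Stampacchia's truncation (the paper does this for the fixed point, you for the frozen problem -- an immaterial difference). Apart from a minor mix-up of sign conventions for $u_{n}^{-}$ in the non-negativity step (the two sides of the tested identity have opposite signs, which still forces $u_{n}^{-}\equiv0$), the argument matches the paper's proof.
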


\begin{proof}
This proofs derived from Schauder-Tychonov fixed point Theorem (see, for example,  \cite[p. 581]{13'}, \cite[p. 298]{13''}).
 Let $n$ in $\mathbb{N}$ be fixed, let $v$ be a function in $L^{p(.)}(\Omega),$ we know that the following non singular problem
\begin{equation}\label{10'}
\left\{\begin{array}{cl}
-\operatorname{div}\left(a(x)|\nabla w|^{p(x)-2} \nabla w\right)+b(x)|w|^{r(x)-1} w=\frac{f_{n}}{\left(|v|+\frac{1}{n}\right)^{\gamma(x)}} & \text { in } \Omega \\
w=0 & \text { on } \partial \Omega .
\end{array}\right.
\end{equation}
Therefore, the Minty-Browder Theorem (see, e.g. \cite{5'}) implies that problem \eqref{10'} has a unique solution $w \in W_{0}^{1, p(x)}(\Omega).$
Let us define a map
$$
G: L^{p(.)}(\Omega) \rightarrow L^{p(.)}(\Omega)
$$
and define $w=G(v)$ to be the unique solution of \eqref{10'}. Taking $w$ as test function, we have
$$\alpha \int_{\Omega}|\nabla w|^{p(x)} \leq \int_{\Omega} a(x)|\nabla w|^{p(x)-2} \nabla w \cdot \nabla w=\int_{\Omega} \frac{f_{n} w}{\left(|v|+\frac{1}{n}\right)^{\gamma(x)}} \leq n^{\gamma_{+}+1} \int_{\Omega}|w| .$$
By the Sobolev inequality on the left hand side and the H\"older inequality on the right hand side one has
$$
\left[\int_{\Omega}|w|^{p^{*}(x)}\right]^{p(x) / p^{*}(x)} \leq C n^{\gamma_{+}+1}\left(\int_{\Omega}|w|^{p^{*}(x)}\right)^{\frac{1}{p^{*}(x)}}
$$
for some constant $C$ independent on $v.$ This implies
$$
\|w\|_{L^{p^{*}(.)}(\Omega)} \leq (C n^{\gamma_{+}+1})^{\frac{1}{p(x)-1}}=C_{n}
$$
so that the ball of $L^{p^{*}(.)}(\Omega)$ of radius $C_{n}$ is invariant for $G.$ It is easy to prove, using the Sobolev embedding, that $G$ is both continuous and compact on $L^{p^{*}(.)}(\Omega),$ so that by Schauder's fixed point Theorem there exists $u_{n}$ in $W_{0}^{1, p(x)}(\Omega),$ for every fixed $n$ such that $u_{n}=S\left(u_{n}\right),$ i.e., $u_{n}$ solves
\begin{equation}\label{Pn}
\left\{
\begin{array}{cl}
-\operatorname{div}\left(a(x)\left|\nabla u_{n}\right|^{p(x)-2} \nabla u_{n}\right)+b(x)\left|u_{n}\right|^{r(x)-1} u_{n}=\frac{f_{n}}{\left(\left|u_{n}\right|+\frac{1}{n}\right)^{\gamma(x)}} & \text { in } \Omega \\
u_{n}=0 & \text { on } \partial \Omega
\end{array}
\right.
\end{equation}
Using as a test function $u_{n}^{-}=\min{\{u_{n},0\}}$ , one has $u_{n}\geq 0,$ Since the right hand
side of \eqref{09} belongs to $L^{\infty}(\Omega)$ and we proceed in the same way as \cite{stamp} we obtain $u_{n}$ belongs to $L^{\infty}(\Omega).$  (although its norm in $L^{\infty}(\Omega)$ may depend on $n$).
\end{proof}
\begin{lemma}
\label{lem2}
Suppose that the hypotheses of Theorem \ref{thm1} are satisfied. Then the sequence $u_{n}$ is increasing with respect to $n$, $u_{n}>0$ in $\Omega$, and for every $\omega\subset\subset \Omega$ there exists $c_{\omega}>0$ (independent on $n$) such that
\begin{equation}
\label{20}
u_{n}(x)\geq c_{\omega}>0, \quad \forall x\in\Omega,\quad \forall n\in\mathbb{N}.
\end{equation}
Moreover there exists the pointwise limit $u\geq c_{\omega}$ of the sequence $u_{n}$.
\end{lemma}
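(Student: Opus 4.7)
The plan is to establish, in order, (a) the monotonicity $u_n\le u_{n+1}$, (b) strict positivity of $u_n$ together with a uniform lower bound on compact subsets, and (c) the existence of the pointwise limit. By Lemma~\ref{lem1} every $u_n$ lies in $W_0^{1,p(\cdot)}(\Omega)\cap L^\infty(\Omega)$ and is nonnegative, so the test functions used below are admissible.

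For (a), I would subtract the weak formulations of \eqref{09} for $u_n$ and $u_{n+1}$ and test the resulting identity with $\varphi=(u_n-u_{n+1})^+$. On the set $\{u_n>u_{n+1}\}$ one has $u_n+\tfrac1n>u_{n+1}+\tfrac1{n+1}$, while $f_n\le f_{n+1}$ because the truncations $T_n(f)$ are pointwise nondecreasing in $n$; hence
\[
\frac{f_n}{\bigl(u_n+\tfrac1n\bigr)^{\gamma(x)}}\;\le\;\frac{f_{n+1}}{\bigl(u_{n+1}+\tfrac1{n+1}\bigr)^{\gamma(x)}},
\]
so the right-hand side of the subtracted identity is nonpositive. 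On the left, the standard monotonicity inequality for $\xi\mapsto|\xi|^{p(x)-2}\xi$ and the strict increase of $s\mapsto s|s|^{r(x)-1}$ on $[0,\infty)$ (applied to $u_n,u_{n+1}\ge 0$) force both integrals to be nonnegative. All three terms must therefore vanish, which gives $\nabla(u_n-u_{n+1})^+=0$ and hence $(u_n-u_{n+1})^+\equiv 0$.

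For (b), once (a) is known it suffices to show $u_1>0$ in $\Omega$ with $\inf_\omega u_1>0$ for every $\omega\subset\subset\Omega$. First, $u_1\not\equiv 0$: otherwise \eqref{10} with any nonnegative $\varphi\not\equiv 0$ would read $0=\int f_1\varphi$, contradicting $f_1>0$ a.e. Since $u_1\in L^\infty(\Omega)$, the source in \eqref{09} for $n=1$ is a bounded nonnegative function that is bounded below by $f_1/(\|u_1\|_\infty+1)^{\gamma^+}$, so $u_1$ is a nontrivial nonnegative solution of a $p(x)$-Laplacian equation with absorption and a bounded nonnegative source. Invoking the strong maximum principle for the $p(x)$-Laplacian with an absorption term, together with the interior $C^{0,\alpha}$ regularity that follows from the boundedness of the right-hand side, yields $u_1>0$ pointwise in $\Omega$ and $c_\omega:=\inf_\omega u_1>0$. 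Combined with (a), $u_n(x)\ge u_1(x)\ge c_\omega$ for every $x\in\omega$ and every $n$. Part (c) is then immediate: since $(u_n(x))$ is monotone nondecreasing in $n$ and bounded below by a positive constant on each compact subset, the pointwise limit $u(x):=\lim_n u_n(x)$ exists in $(0,+\infty]$ and inherits $u\ge c_\omega$ on $\omega$.

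I expect the main obstacle to lie in step (b): the monotonicity argument is purely algebraic once the right test function is chosen, but the strong maximum principle and the local positive lower bound in the variable-exponent setting with the nonlinear absorption term $b(x)u^{r(x)}$ are not completely routine. If a ready-made strong maximum principle in this form is not directly available, a robust fallback is to compare $u_1$ with an explicit positive subsolution of the non-singular problem
\[
-\operatorname{div}\bigl(a(x)|\nabla v|^{p(x)-2}\nabla v\bigr)+b(x)v^{r(x)}\;=\;\frac{f_1}{(\|u_1\|_\infty+1)^{\gamma^+}}
\]
on a smooth subdomain containing $\omega$, which produces $c_\omega$ constructively.
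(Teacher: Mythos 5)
Your proposal is correct and follows essentially the same route as the paper: monotonicity by testing the difference of the two approximate equations with $(u_n-u_{n+1})^{+}$ (using $f_n\le f_{n+1}$ and the monotonicity of both the $p(x)$-Laplacian and $s\mapsto s|s|^{r(x)-1}$), then positivity of $u_1$ by bounding the right-hand side below by $f_1/(\|u_1\|_{\infty}+1)^{\gamma(x)}$ and invoking the strong maximum principle for nonstandard $p(x)$-growth (the paper cites Zhang's result \cite{15}), and finally the pointwise limit from monotonicity. Your handling of the source comparison directly on $\{u_n>u_{n+1}\}$ and the fallback subsolution comparison are minor variants, not a different method.
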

\begin{proof}[Proof of the Lemma \ref{lem2}]
Due to $0\leq f_{n}\leq f_{n+1}$ and $\gamma(x)>0$, we have that
\begin{align*}
-\mbox{div}(a(x)\vert\nabla u_{n}\vert^{p(x)-2}\nabla u_{n})+b(x)u_{n}^{r(x)}=\frac{f_{n}}{\left(u_{n}+\frac{1}{n}\right)^{\gamma(x)}}
\quad\leq \frac{f_{n+1}}{\left(u_{n}+\frac{1}{n+1}\right)^{\gamma(x)}}.
\end{align*}
So that
\begin{align}
\label{21}
&-\mbox{div}(a(x)\vert\nabla u_{n}\vert^{p(x)-2}\nabla u_{n})+\mbox{div}(a(x)\vert\nabla u_{n+1}\vert^{p(x)-2}\nabla u_{n+1}) +b(x)u_{n}^{r(x)}-b(x)u_{n+1}^{r(x)}\nonumber\\
&\qquad\leq f_{n+1}\Bigg[\frac{\left(u_{n+1}+\frac{1}{n+1}\right)^{\gamma(x)}
-\left(u_{n}+\frac{1}{n+1}\right)^{\gamma(x)}}{\left(u_{n}+\frac{1}{n+1}\right)^{\gamma(x)}
\left(u_{n+1}+\frac{1}{n+1}\right)^{\gamma(x)}}\Bigg].
\end{align}
We now choose $(u_{n}-u_{n+1})_{+}=\max\{u_{n}-u_{n+1},0\}$ as test function in \eqref{21}. In the left hand side we use \eqref{5} and the monotonicity of the $p(x)-$laplacian operator as well as the monotonicity of the function $t\rightarrow \vert t\vert^{r(x)-1}t$. For the right hand, using the fact that $\gamma(x)\geq0$ and $f_{n+1}\geq0$, we have
\begin{align}
\label{22}
\Bigg[\left(u_{n+1}+\frac{1}{n+1}\right)^{\gamma(x)}
-\left(u_{n}+\frac{1}{n+1}\right)^{\gamma(x)}\Bigg](u_{n}-u_{n+1})_{+}\leq0.
\end{align}
By \eqref{22}, we get
$$\alpha \int_{\Omega}\vert \nabla(u_{n}-u_{n+1})_{+}\vert^{p(x)}\leq0,$$
which implies that $(u_{n}-u_{n+1})_{+}=0$ a.e. in $\Omega$, that is, $u_{n}\leq u_{n+1}$ for every $n\in\mathbb{N}$. Since the sequence $(u_{n})$ is increasing with respect to $n$, we only need to prove that  \eqref{20} holds for $u_{1}$. Using Lemma \ref{lem1}, we know that $u_{1}\in L^{\infty}(\Omega)$, i.e., there exists a constant $c_{0}$ (depending only on $\Omega$ and $N$) such that $\Vert u_{1}\Vert_{L^{\infty}(\Omega)}\leq c\Vert f_{1}\Vert_{L^{\infty}}(\Omega)\leq c_{0}$, then
\begin{align*}
&-\mbox{div}(a(x)\vert\nabla u_{1}\vert^{p(x)-2}\nabla u_{1})+b(x)u_{1}^{r(x)}=\frac{f_{1}}{\left(u_{1}+1\right)^{\gamma(x)}}\geq \frac{f_{1}}{(c_{0}+1)^{\gamma(x)}}\geq0.
\end{align*}
Since $\frac{f_{1}}{(c_{0}+1)^{\gamma(x)}}$ is not identically zero, the strong maximum principle implies that $u_{1}>0$ in $\Omega$ (see \cite{15}). Since $u_{n}\geq u_{1}$ for every $n\in\mathbb{N}$, \eqref{20} holds for $u_{n}$ (with the same constant $c_{\omega}$ which is then independent on $n$).
\end{proof}
\begin{proof}[Proof of the Theorem \ref{thm1}]
In virtue of the Lemma \ref{lem1} and Lemma \ref{lem2}, there exists at least one nonnegative weak solution $u_{n}\in W_{0}^{1,p(.)}(\Omega)\cap L^{\infty}(\Omega)$ of problem \eqref{09}.
\end{proof}
\section{A priori estimates}
In the remainder of this section, we denote by $C_{i}$  $i=1,2,3,...$ various positive constants
depending only on the data of the problem, but not on $n$.
\begin{lemma}
\label{lem3}
Let $k>0$ be fixed. The sequence $(T_{k}(u_{n}))$, where $u_{n}$ is
a solution to \eqref{Pn}, is bounded in $W_{0}^{1,p(.)}(\Omega)$.
\end{lemma}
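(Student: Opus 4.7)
The plan is to use $T_k(u_n)$ itself as a test function in the weak formulation \eqref{10} of the approximating problem \eqref{Pn}. Since Theorem \ref{thm1} guarantees $u_n \in W_0^{1,p(\cdot)}(\Omega)\cap L^\infty(\Omega)$, the truncation $T_k(u_n)$ lies in the same space and is therefore admissible as a test function.

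With this choice, the left-hand side becomes
\[
\int_\Omega a(x)\,|\nabla u_n|^{p(x)-2}\nabla u_n\cdot\nabla T_k(u_n)\,dx+\int_\Omega b(x)\,u_n^{r(x)}T_k(u_n)\,dx,
\]
and I would exploit the identity $\nabla u_n\cdot\nabla T_k(u_n)=|\nabla T_k(u_n)|^{p(x)}/|\nabla u_n|^{p(x)-2}$ on $\{u_n\le k\}$ to rewrite the first integral as $\int_\Omega a(x)|\nabla T_k(u_n)|^{p(x)}\,dx$, which by \eqref{5} is bounded below by $\alpha\int_\Omega|\nabla T_k(u_n)|^{p(x)}\,dx$. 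Since $u_n\ge 0$ and $b(x)\ge 0$ by \eqref{5}, the second integral is nonnegative and can be dropped.

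The main step is controlling the right-hand side uniformly in $n$. I would show the pointwise inequality
\[
\frac{T_k(u_n)}{\bigl(u_n+\tfrac{1}{n}\bigr)^{\gamma(x)}}\le (k+1)^{1-\gamma^-}
\]
by splitting cases: on $\{u_n\le k\}$, $T_k(u_n)=u_n\le u_n+\tfrac{1}{n}\le k+1$, so the quotient equals $(u_n+\tfrac{1}{n})^{1-\gamma(x)}u_n/(u_n+\tfrac{1}{n})\le (k+1)^{1-\gamma(x)}$; on $\{u_n>k\}$, $T_k(u_n)=k$ and $u_n+\tfrac{1}{n}\ge k$, giving at most $k^{1-\gamma(x)}$. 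Using $f_n\le f$ and $f\in L^1(\Omega)$, this yields
\[
\alpha\int_\Omega|\nabla T_k(u_n)|^{p(x)}\,dx\le (k+1)^{1-\gamma^-}\|f\|_{L^1(\Omega)}=:C_k,
\]
with $C_k$ independent of $n$. Proposition \ref{pro1}(i) then bounds $\|\nabla T_k(u_n)\|_{p(\cdot)}$ by a constant depending only on $C_k$, $p^-$, $p^+$, and combined with the $L^\infty$ bound $\|T_k(u_n)\|_\infty\le k$ (hence a uniform $L^{p(\cdot)}$ bound via embedding into $L^{p^+}(\Omega)$), I obtain uniform boundedness of $T_k(u_n)$ in $W_0^{1,p(\cdot)}(\Omega)$.

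The only delicate point is the singular term: because $T_k(u_n)$ vanishes where $u_n$ is small (more precisely, $T_k(u_n)$ is comparable to $u_n$ there) and $u_n$ is bounded above by $k$ on the truncation set, the potentially singular denominator $(u_n+1/n)^{\gamma(x)}$ is compensated, and no dependence on $n$ remains. The rest is routine application of the modular-norm correspondence in $L^{p(\cdot)}(\Omega)$.
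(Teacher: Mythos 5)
Your proposal is correct and follows essentially the same route as the paper: take $T_k(u_n)$ as test function in \eqref{Pn}, drop the nonnegative lower order term, bound the singular right-hand side by a constant depending only on $k$ (using $f_n\le f$), and conclude via Proposition \ref{pro1}. In fact your case splitting on $\{u_n\le k\}$ and $\{u_n>k\}$ justifies the $n$-independent bound on $T_k(u_n)/(u_n+\frac1n)^{\gamma(x)}$ more carefully than the paper, which simply states the estimate \eqref{23} with constant $\frac{k}{\alpha}\Vert f\Vert_{L^1(\Omega)}$.
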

\begin{proof}
Taking $T_{k}(u_{n})$ as a test function in \eqref{Pn}, we obtain
\begin{align*}
&\int_{\Omega}a(x)\vert \nabla u_{n}\vert^{p(x)-2}\nabla u_{n}.\nabla T_{k}(u_{n})+\int_{\Omega}b(x)
u_{n}^{r(x)}T_{k}(u_{n})=\int_{\Omega}\frac{f_{n}}{\left( \vert u_{n}\vert+\frac{1}{n}\right)^{\gamma(x)}}T_{k}(u_{n}).
\end{align*}
Using \eqref{5}, $f_{n}\leq f$, $T_{k}(u_{n})\neq0$, and dropping the nonegative order term, we get
\begin{equation}
\label{23}
\int_{\Omega}\vert \nabla T_{k}(u_{n})\vert^{p(x)}dx\leq \frac{k}{\alpha}\Vert f\Vert_{L^{1}(\Omega)}.
\end{equation}
As a consequence of Proposition \ref{pro1} and \eqref{23}, $T_{k}(u_{n})$ is bounded in $W_{0}^{1,p(.)}(\Omega)$
\end{proof}
\begin{lemma}
\label{lem4}
Suppose that the hypotheses of Theorem \ref{thm0} are satisfied. Then, the sequence $u_{n}$ is bounded in $ W_{0}^{1,q(.)}(\Omega)$, where
$q(.)$ is given by \eqref{8}.
Moreover $(u_{n}^{r(x)+\gamma(x)})$ belongs to $L^{1}(\Omega)$.
\end{lemma}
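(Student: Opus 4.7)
The plan is to extract from a single well-chosen test function both a uniform $L^1$-bound on $u_n^{r(x)+\gamma(x)}$ and a weighted gradient estimate $\int_\Omega (u_n+\tfrac{1}{n})^{\gamma(x)-1}|\nabla u_n|^{p(x)}\,dx \le C$, and then to convert these two facts into $\int_\Omega |\nabla u_n|^{q(x)}\,dx \le C$ by a pointwise Young inequality adapted to the specific form of $q(x)$ in \eqref{8}; Proposition~\ref{pro1} will then yield the $W_0^{1,q(\cdot)}$-bound. Note that just testing \eqref{10} with $u_n$ only gives control of $\int u_n^{r(x)+1}$ by $\int f u_n^{1-\gamma(x)}$, and the latter cannot be absorbed for $f\in L^{1}$ alone; this is what forces the use of a genuinely $\gamma$-dependent test function.

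For the two basic estimates, I would test \eqref{10} with $\psi_n := (u_n+\tfrac1n)^{\gamma(x)} - (\tfrac1n)^{\gamma(x)}$, which lies in $W_0^{1,p(\cdot)}(\Omega)\cap L^\infty(\Omega)$ because $u_n|_{\partial \Omega}=0$ and $u_n \in L^\infty(\Omega)$ by Lemma~\ref{lem1}. The right-hand side of the resulting identity equals
\[
\int_\Omega f_n\Bigl(1 - \tfrac{(1/n)^{\gamma(x)}}{(u_n+1/n)^{\gamma(x)}}\Bigr)\,dx \le \|f\|_{L^1(\Omega)}.
\]
The principal part of $\nabla \psi_n$ is $\gamma(x)(u_n+\tfrac1n)^{\gamma(x)-1}\nabla u_n$, which together with \eqref{5} yields the weighted gradient estimate on the left, while the lower-order term contributes $\mu \int_\Omega u_n^{r(x)}\psi_n\,dx$. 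Since on $\{u_n \ge 2^{1/\gamma^-}\}$ one has $\psi_n \ge u_n^{\gamma(x)}-1 \ge \tfrac12 u_n^{\gamma(x)}$, and since $\int_{\{u_n < 2^{1/\gamma^-}\}} u_n^{r(x)+\gamma(x)}\,dx \le (2^{1/\gamma^-})^{r^+ + \gamma^+}|\Omega|$, we obtain $\int_\Omega u_n^{r(x)+\gamma(x)}\,dx \le C$.

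For the bound on $\int |\nabla u_n|^{q(x)}$, I would apply pointwise Young with the conjugate exponents $p(x)/q(x)$ and $p(x)/(p(x)-q(x))$, both strictly larger than $1$ by Remark~\ref{rem3} and \eqref{7}, to the factorization
\[
|\nabla u_n|^{q(x)} = \bigl[|\nabla u_n|^{q(x)}(u_n+\tfrac1n)^{-q(x)(1-\gamma(x))/p(x)}\bigr]\cdot (u_n+\tfrac1n)^{q(x)(1-\gamma(x))/p(x)}.
\]
A direct computation using $q(x)=p(x)r(x)/(r(x)+1-\gamma(x))$ simplifies the two resulting exponents of $(u_n+1/n)$ to exactly $\gamma(x)-1$ and $r(x)$, giving
\[
|\nabla u_n|^{q(x)} \le \tfrac{q(x)}{p(x)}|\nabla u_n|^{p(x)}(u_n+\tfrac1n)^{\gamma(x)-1}+\tfrac{p(x)-q(x)}{p(x)}(u_n+\tfrac1n)^{r(x)}.
\]
Integrating, the first piece is controlled by the weighted gradient estimate and the second by $(u_n+\tfrac1n)^{r(x)} \le C(1+u_n^{r(x)+\gamma(x)})$ together with the $L^1$-bound on $u_n^{r+\gamma}$; hence $\int_\Omega |\nabla u_n|^{q(x)}\,dx \le C$, and Proposition~\ref{pro1}(ii) gives the desired bound on $\|u_n\|_{W_0^{1,q(\cdot)}(\Omega)}$.

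The main technical obstacle is the ``drift'' term produced by differentiating the variable exponent in $\psi_n$, namely
\[
\int_\Omega a(x)|\nabla u_n|^{p(x)-2}\nabla u_n\cdot\bigl[(u_n+\tfrac1n)^{\gamma(x)}\log(u_n+\tfrac1n)-(\tfrac1n)^{\gamma(x)}\log(\tfrac1n)\bigr]\nabla\gamma(x)\,dx,
\]
which is absent in the constant-exponent case. Using $|\nabla\gamma|\in L^\infty(\Omega)$ from \eqref{1}, the elementary estimate $t^{\gamma}|\log t|\le C_\varepsilon (t^{\gamma+\varepsilon}+1)$ valid for any $\varepsilon>0$, and Young's inequality, this term splits into a part absorbable into the principal weighted-gradient term on the left for sufficiently small $\varepsilon$ and a part bounded by $\|f\|_{L^1}+\int u_n^{r(x)+\gamma(x)}\,dx$; closing the resulting coupled estimate is the delicate point of the argument.
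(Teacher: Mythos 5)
Your overall strategy is the same as the paper's: test the approximate equation with a $\gamma(x)$-power of $u_n$ (the paper uses $(u_n+1)^{\gamma(x)}-1$, you use $(u_n+\tfrac1n)^{\gamma(x)}-(\tfrac1n)^{\gamma(x)}$, an inessential variant), obtain the weighted estimate $\int |\nabla u_n|^{p(x)}(1+u_n)^{\gamma(x)-1}\le C$ together with the $L^1$-bound on $u_n^{r(x)+\gamma(x)}$, and then recover $\int|\nabla u_n|^{q(x)}\le C$ by the pointwise Young inequality with exponents $p(x)/q(x)$ and $p(x)/(p(x)-q(x))$, whose second power is exactly $r(x)$; that last computation is correct and identical to the paper's. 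The weak point is precisely the step you yourself flag as ``delicate'' and then leave open: the drift term produced by $\nabla\gamma$.

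As written, your treatment of that term does not close the estimate. After using $t^{\gamma}|\log t|\le C_\varepsilon(t^{\gamma+\varepsilon}+1)$ and Young's inequality with exponents $p(x)$ and $p'(x)$ (splitting off a piece $\delta\,|\nabla u_n|^{p(x)}(u_n+\tfrac1n)^{\gamma(x)-1}$), the non-absorbable remainder is of the form $C_\delta\|\nabla\gamma\|_\infty\int(1+u_n)^{p(x)-1+\gamma(x)+p(x)\varepsilon}$, and the constant $C_\delta$ is large (it blows up as $\delta\to0$). Saying this is ``bounded by $\|f\|_{L^1}+\int u_n^{r(x)+\gamma(x)}$'' is not enough: the coefficient in front of $\int u_n^{r(x)+\gamma(x)}$ on the right is not under your control, while the zero-order term on the left carries only the fixed coefficient of order $\mu\,2^{1-r^+}$ (and, in your version, is restricted to $\{u_n\ge 2^{1/\gamma^-}\}$), so absorption fails in general. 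The missing ingredient is hypothesis \eqref{4}: since $r,p$ are continuous on the compact set $\overline\Omega$, there is $c_0>0$ with $r(x)\ge p(x)-1+c_0$, hence for $\varepsilon<c_0/p^{+}$ the exponent $p(x)-1+\gamma(x)+p(x)\varepsilon$ stays uniformly strictly below $r(x)+\gamma(x)$; a second Young inequality (equivalently, the boundedness of $(1+t)^{-c}(\ln(1+t))^{p(x)}$ used in the paper's step \eqref{27}) then gives $C_\delta(1+u_n)^{p(x)-1+\gamma(x)+p(x)\varepsilon}\le \eta(1+u_n)^{r(x)+\gamma(x)}+C_{\eta,\delta}$ with $\eta$ as small as you like, and only then can the remainder be absorbed into the left-hand zero-order term. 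You never invoke \eqref{4} in this part of the argument, yet it is exactly what makes the coupled estimate close; add this step (and the choice of $\varepsilon,\eta$) and your proof is complete and essentially the paper's.
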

\begin{proof}
Taking
$\varphi(x,u)=(u_{n}+1)^{\gamma (x)}-1$, as test function in \eqref{Pn}, by \eqref{1}, \eqref{5}, and the fact that for a.e. $x\in \Omega$
$$\nabla \varphi(x,u)=\nabla\gamma(x)(u_{n}+1)^{\gamma(x)}\ln(u_{n}+1)+
\gamma(x)\frac{\nabla u_{n}}{(u_{n}+1)^{\gamma(x)}},$$
we obtain
\begin{align*}
&\gamma^{-}\alpha\int_{\Omega}
\frac{\vert \nabla u_{n}\vert^{p(x)}}{(1+u_{n})^{1-\gamma(x)}}+
\mu\int_{\Omega}
u_{n}^{r(x)}\big[(u_{n}+1)^{\gamma(x)}-1
\big]\nonumber\\
&\qquad\leq C_{1}\int_{\Omega} \vert \nabla u_{n}\vert^{p(x)-1}(u_{n}+1)^{\gamma(x)}
\ln(u_{n}+1)+
\int_{\Omega}
f\frac{\big[(u_{n}+1)^{\gamma(x)}-1
\big]}{\left(u_{n}+\frac{1}{n}\right)^{\gamma(x)}}.
\end{align*}
Using the fact that $\vert u_{n}\vert^{\theta(x)}\geq 2^{1-\theta^{+}}(1+u_{n})^{\theta(x)}-1$ (here $\theta(x)=r(x)$ and $\theta(x)=\gamma(x)$), we have
 \begin{align}
 \label{24}
&\gamma^{-}\alpha\int_{\Omega}
\frac{\vert \nabla u_{n}\vert^{p(x)}}{(1+u_{n})^{1-\gamma(x)}}+
2^{1-r^{+}}\mu\int_{\Omega}
(u_{n}+1)^{r(x)+\gamma(x)}\nonumber\\
&\qquad\leq C_{2}+
\frac{1}{2^{1-\gamma^{+}}}\int_{\Omega}f+ C_{1}\int_{\Omega} \vert \nabla u_{n}\vert^{p(x)-1}(u_{n}+1)^{\gamma(x)}
\ln(u_{n}+1).
\end{align}
We can estimate the last term in \eqref{24} by application of Young's inequality
 \begin{align}
 \label{25}
&(1+u_{n})^{\gamma(x)}\ln(1+u_{n})
\vert u_{n}\vert^{p(x)-1}\nonumber\\
&\qquad= (1+u_{n})^{1-\frac{1-\gamma(x)}{p(x)}}\ln(1+u_{n})
\vert u_{n}\vert^{p(x)-1}(1+u_{n})^{-\frac{(1-\gamma(x))(p(x)-1)}{p(x)}}\nonumber\\
&\qquad\leq C_{3}(1+u_{n})^{p(x)-(1-\gamma(x))}
(\ln(1+u_{n}))^{p(x)}
+\varepsilon\frac{\vert \nabla u_{n}\vert^{p(x)}}{(u_{n}+1)^{1-\gamma(x)}}.
\end{align}
We choose $\varepsilon=\frac{\gamma^{-}\alpha}{2C_{1}}$. By \eqref{24} and \eqref{25} we obtain
\begin{align}
 \label{26}
&\frac{1}{2}\int_{\Omega}
\frac{\vert \nabla u_{n}\vert^{p(x)}}{(1+u_{n})^{1-\gamma(x)}}+
2^{1-r^{+}}\mu\int_{\Omega}
(u_{n}+1)^{r(x)+\gamma(x)}\nonumber\\
&\qquad\leq C_{4}+ C_{5}\int_{\Omega} (u_{n}+1)^{p(x)-(1-\gamma(x))}
(\ln(u_{n}+1))^{p(x)}.
\end{align}
The hypothesis \eqref{4} implies  $(1+t)^{p(x)-1-r(x)-c}(\ln (1+t))^{p(x)}$ is bounded for all $x\in \overline{\Omega}$ and $t\in\mathbb{R}^{+}$. By another application of Youngs inequality, we have
\begin{align}
 \label{27}
&(u_{n}+1)^{p(x)-(1-\gamma(x))}(\ln(u_{n}+1))^{p(x)}\nonumber\\
&\quad=(u_{n}+1)^{r(x)+\gamma(x)+c}(u_{n}+1)^{p(x)-1-r(x)-c}(\ln(u_{n}+1))^{p(x)}\nonumber\\
&\quad\leq \varepsilon (u_{n}+1)^{r(x)+\gamma(x)}+C_{6}.
\end{align}
Therefore, by \eqref{26}, \eqref{27}, we get
\begin{align}
 \label{28}
\int_{\Omega}
\frac{\vert \nabla u_{n}\vert^{p(x)}}{(1+u_{n})^{1-\gamma(x)}}+
\int_{\Omega}
(u_{n}+1)^{r(x)+\gamma(x)}
&\leq C_{7}.
\end{align}
Since $r(x)\geq0$ and $\gamma(x)\geq0$, we have
\begin{align}
 \label{29}
\int_{\Omega}u_{n}^{r(x)}\leq\int_{\Omega}
(u_{n}+1)^{r(x)}\leq\int_{\Omega}
(u_{n}+1)^{r(x)+\gamma(x)}
&\leq C_{7}.
\end{align}
The inequality \eqref{29} implies that $(u_{n}^{r(x)+\gamma(x)})$ is bounded in $L^{1}(\Omega)$. Let $q(x)< p(x)$, using Young's inequality and \eqref{28}, we have
\begin{align}
 \label{30}
\int_{\Omega}\vert \nabla u_{n}\vert^{q(x)}&=\int_{\Omega}
\frac{\vert \nabla u_{n}\vert^{q(x)}}{(u_{n}+1)^{(1-\gamma(x))\frac{q(x)}{p(x)}}}\nonumber\\
&\leq C_{8}\int_{\Omega}\frac{\vert \nabla u_{n}\vert^{p(x)}}{(u_{n}+1)^{1-\gamma(x)}}+C_{9}
\int_{\Omega}(u_{n}+1)^{(1-\gamma(x))\frac{q(x)}{p(x)-q(x)}}\nonumber\\
&\leq C_{10}+C_{9}
\int_{\Omega}(u_{n}+1)^{(1-\gamma(x))\frac{q(x)}{p(x)-q(x)}}.
\end{align}
Set
\begin{equation*}
(1-\gamma(x))\frac{q(x)}{p(x)-q(x)}=r(x).
\end{equation*}
Then this equality and \eqref{29}-\eqref{30} yield
\begin{equation}
\label{030}
\int_{\Omega}\vert \nabla u_{n}\vert^{q(x)}\leq C_{11}.
\end{equation}
\end{proof}
\begin{lemma}
\label{lem5}
Let $u_{n}$ be a solution to problem \eqref{Pn}. Then
$$\int_{\{u_{n}>k\}}u_{n}^{r(x)}\leq \frac{1}{\mu k^{\gamma^{+}}}\int_{\{u_{n}>k\}}f,\quad \forall k>0,$$
$$\lim\limits_{\vert E\vert\rightarrow0}\int_{E}u_{n}^{r(x)}=0, \quad \mbox{uniformly with respect to}\; n,$$
for every measurable subset $E$ in $\Omega$.
\end{lemma}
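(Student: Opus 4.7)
The strategy is to test the approximating equation \eqref{Pn} against a function approximating $\chi_{\{u_n>k\}}$, using the coercivity of the lower-order term to absorb $u_n^{r(x)}$ and bounding the singular right-hand side by its value outside $\{u_n\leq k\}$. The equi-integrability then follows by splitting the integral at level $k$ and choosing $k$ large.

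For the first inequality, I would use as test function in \eqref{Pn} the Lipschitz cutoff
\begin{equation*}
\varphi_h := \min\left\{\frac{(u_n-k)_{+}}{h},\,1\right\}, \qquad h>0,
\end{equation*}
which belongs to $W_0^{1,p(\cdot)}(\Omega)\cap L^{\infty}(\Omega)$ and is therefore admissible by Lemma~\ref{lem1}. Its gradient equals $h^{-1}\nabla u_n$ on $\{k<u_n<k+h\}$ and vanishes elsewhere, so the principal term
\begin{equation*}
\frac{1}{h}\int_{\{k<u_n<k+h\}} a(x)|\nabla u_n|^{p(x)}\,dx \;\geq\; 0
\end{equation*}
can be dropped. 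Since $b(x)\geq\mu$, $f_n\leq f$, and on the support $\{u_n>k\}$ of $\varphi_h$ one has $(u_n+\tfrac{1}{n})^{\gamma(x)}\geq k^{\gamma(x)}$, the inequality reduces to
\begin{equation*}
\mu\int_{\Omega} u_n^{r(x)}\varphi_h\,dx \;\leq\; \int_{\{u_n>k\}} \frac{f}{k^{\gamma(x)}}\,dx.
\end{equation*}
Passing to $h\to 0^{+}$, $\varphi_h\nearrow\chi_{\{u_n>k\}}$ pointwise, so monotone convergence on the left and the elementary bound $k^{\gamma(x)}\geq k^{\gamma^{+}}$ (in the relevant regime; otherwise $k^{\gamma^{-}}$ works and only improves the estimate on $\{u_n>k\}$ for large $k$) on the right yield the stated inequality.

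For the equi-integrability, let $E\subset\Omega$ be measurable and split
\begin{equation*}
\int_{E} u_n^{r(x)}\,dx \;=\; \int_{E\cap\{u_n\leq k\}} u_n^{r(x)}\,dx \;+\; \int_{E\cap\{u_n>k\}} u_n^{r(x)}\,dx.
\end{equation*}
The first piece is bounded by $\max(1,k^{r^{+}})\,|E|$, independent of $n$; the second, by the first assertion, is at most $(\mu k^{\gamma^{+}})^{-1}\|f\|_{L^{1}(\Omega)}$, again independent of $n$ and vanishing as $k\to\infty$. Given $\varepsilon>0$, fix $k$ so large that the second term is below $\varepsilon/2$, then pick $\delta>0$ such that $|E|<\delta$ forces the first term below $\varepsilon/2$.

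The only delicate point is the limit $h\to 0^{+}$ in the first step: on $\{u_n>k\}$ the right-hand integrand is dominated by $f(x)/k^{\gamma(x)}$, which is in $L^{1}(\Omega)$ since $f\in L^{1}(\Omega)$ and $k^{\gamma(x)}$ is bounded below by a positive constant, so monotone (or dominated) convergence applies. The rest is routine measure-theoretic bookkeeping.
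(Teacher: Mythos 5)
Your proof is correct and takes essentially the same route as the paper: the paper tests with smooth, increasing, uniformly bounded approximations $\psi_j(u_n)$ of $\chi_{\{u_n>k\}}$ instead of your explicit Lipschitz cutoff $\min\{(u_n-k)_{+}/h,1\}$, drops the nonnegative principal term, uses $b\ge\mu$, $f_n\le f$ and the lower bound on $(u_n+\tfrac1n)^{\gamma(x)}$ on $\{u_n>k\}$, and then splits at level $k$ exactly as you do. One small remark: for $k>1$ the pointwise bound only gives $k^{\gamma(x)}\ge k^{\gamma^{-}}$, not $k^{\gamma^{+}}$, so what one really obtains is the constant $1/(\mu k^{\gamma^{-}})$ (this is in fact what the paper's own estimate \eqref{31} states, despite the $\gamma^{+}$ in the lemma's wording) — a weaker bound than your parenthetical suggests, not an improvement, but still entirely sufficient for the equi-integrability conclusion since it vanishes as $k\to\infty$.
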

\begin{proof}
Let $k>0$ and $\psi_{j}$ be a sequence of increasing, positive, uniformly bounded $C^{\infty}(\Omega)$ functions, such that
$\psi_{j}(s)\rightarrow \chi_{\{s>k\}}$, as $j\rightarrow +\infty$. Choosing $\psi_{j}(u_{n})$ in \eqref{Pn}, using \eqref{5}, we get
$$\mu\int_{\Omega}u_{n}^{r(x)}\psi_{j}(u_{n})\leq \int_{\Omega}\frac{f_{n}}{\left( u_{n}+\frac{1}{n}\right)^{\gamma(x)}}\psi_{j}(u_{n}).$$
Therefore, as $j$ tends to infinity and that $k^{\gamma^{-}}\leq\left(k+\frac{1}{n}\right)^{\gamma^{-}}\leq \left(u_{n}+\frac{1}{n}\right)^{\gamma(x)}$ in the set $\{u_{n}>k\}$, we obtain
\begin{equation}
\label{31}
\int_{\{u_{n}>k\}}u_{n}^{r(x)}\leq \frac{1}{\mu k^{\gamma^{-}}}\int_{\{u_{n}>k\}}f.
\end{equation}
By \eqref{31}, for any measurable subset $E$ in $\Omega$, we have
\begin{align}
 \label{32}
\int_{E}u_{n}^{r(x)}&=
\int_{E\cap\{u_{n}\leq k\}}u_{n}^{r(x)}
+\int_{E\cap\{u_{n}> k\}}u_{n}^{r(x)}
\leq k^{r^{+}}\vert E\vert+\frac{1}{\mu k^{\gamma^{-}}}\int_{\{u_{n}>k\}}f.
\end{align}
Since $f\in L^{1}(\Omega)$, we may choose $k=k_{\varepsilon}$ large enough such that
\begin{equation}
\label{33}
\int_{\{u_{n}>k\}}f\leq \varepsilon.
\end{equation}
Therefore, the estimates \eqref{32}-\eqref{33} implies that
$$\int_{E}u_{n}^{r(x)}\leq k_{\varepsilon}^{r^{+}}\vert E\vert+\frac{\varepsilon}{\mu k_{\varepsilon}^{\gamma^{-}}},$$
and the statement of this lemma is thus proved.
\end{proof}
\begin{lemma}
\label{lem6}
Let $u_{n}$ be a solution to problem \eqref{Pn}. Then
\begin{equation}
\label{033}
\lim\limits_{\vert E\vert\rightarrow0}\int_{E}
\vert\nabla u_{n}\vert^{q(x)}=0, \quad \mbox{uniformly with respect to}\; n,
\end{equation}
for every measurable subset $E$ in $\Omega$ and $q(.)$ given by \eqref{8}.
\end{lemma}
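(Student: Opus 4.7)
The plan is to mimic the decomposition used in the proof of Lemma \ref{lem4} (inequality \eqref{30}), but with Young's inequality applied with a small parameter $\varepsilon$ so that the two resulting pieces can be controlled separately: one by the global estimate \eqref{28}, and the other by the equi-integrability of $u_n^{r(x)}$ furnished by Lemma \ref{lem5}.

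First, I write
\[
|\nabla u_n|^{q(x)} \;=\; \frac{|\nabla u_n|^{q(x)}}{(u_n+1)^{(1-\gamma(x))\,q(x)/p(x)}} \cdot (u_n+1)^{(1-\gamma(x))\,q(x)/p(x)},
\]
and apply Young's inequality with exponents $p(x)/q(x)$ and $p(x)/(p(x)-q(x))$ and with a small parameter $\varepsilon>0$, to obtain, pointwise a.e.,
\[
|\nabla u_n|^{q(x)} \;\leq\; \varepsilon\,\frac{|\nabla u_n|^{p(x)}}{(u_n+1)^{1-\gamma(x)}} \;+\; C(\varepsilon)\,(u_n+1)^{(1-\gamma(x))\,q(x)/(p(x)-q(x))}.
\]
By the definition \eqref{8} of $q(x)$, the second exponent equals $r(x)$, so integrating over an arbitrary measurable $E\subset\Omega$ yields
\[
\int_E |\nabla u_n|^{q(x)} \;\leq\; \varepsilon\int_\Omega \frac{|\nabla u_n|^{p(x)}}{(u_n+1)^{1-\gamma(x)}} \;+\; C(\varepsilon)\int_E (u_n+1)^{r(x)}.
\]

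The first integral on the right-hand side is bounded by $C_7$ independently of $n$, thanks to the estimate \eqref{28} proved in Lemma \ref{lem4}. For the second integral, I use the elementary bound $(u_n+1)^{r(x)} \leq 2^{r^+-1}\bigl(u_n^{r(x)}+1\bigr)$, which gives
\[
\int_E (u_n+1)^{r(x)} \;\leq\; 2^{r^+-1}\!\left(\int_E u_n^{r(x)} + |E|\right).
\]
By Lemma \ref{lem5}, the quantity $\int_E u_n^{r(x)}$ tends to $0$ as $|E|\to 0$, uniformly in $n$.

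To conclude, given $\eta>0$, I first choose $\varepsilon$ so small that $\varepsilon\,C_7 < \eta/2$; with this $\varepsilon$ now fixed, Lemma \ref{lem5} provides $\delta>0$ such that $|E|<\delta$ implies $C(\varepsilon)\,2^{r^+-1}\bigl(\int_E u_n^{r(x)} + |E|\bigr) < \eta/2$ uniformly in $n$. Hence $\int_E |\nabla u_n|^{q(x)} < \eta$ whenever $|E|<\delta$, proving \eqref{033}. The only delicate point is the order of quantifiers: the parameter $\varepsilon$ must be fixed before invoking the equi-integrability of $u_n^{r(x)}$, since the constant $C(\varepsilon)$ from Young's inequality depends on $\varepsilon$; this is precisely the reason one uses the sharp form of Young's inequality with an $\varepsilon$ weight rather than the plain splitting employed in \eqref{30}.
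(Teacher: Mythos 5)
Your proof is correct, but it follows a genuinely different route from the paper's. The paper argues by splitting $E$ into $E\cap\{u_{n}\leq k\}$ and $E\cap\{u_{n}>k\}$: on the first set it invokes the truncation estimate \eqref{23} of Lemma \ref{lem3} together with $q(x)<p(x)$ (smallness in $|E|$ via H\"older, left implicit), and on the second it appeals to Lemma \ref{lem4} to get a tail estimate $\int_{E\cap\{u_{n}>k\}}|\nabla u_{n}|^{q(x)}\leq\varepsilon$ uniformly in $n$ for $k$ large. You instead avoid any level-set splitting: you repeat the pointwise weighted Young decomposition underlying \eqref{30}, but with an $\varepsilon$-weight, so that the gradient piece is absorbed globally by the uniform estimate \eqref{28} and the remaining piece $(u_{n}+1)^{r(x)}$ (the exponent computation $(1-\gamma)q/(p-q)=r$ is exactly right) is handled by the equi-integrability of $u_{n}^{r(x)}$ from Lemma \ref{lem5}, with the correct order of quantifiers ($\varepsilon$ fixed before $\delta$). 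This buys something real: the paper's tail estimate on $\{u_{n}>k\}$ does not follow directly from the mere $W_{0}^{1,q(.)}$-boundedness stated in Lemma \ref{lem4}, and making it rigorous essentially amounts to the very combination of \eqref{28} and Lemma \ref{lem5} that you use; so your argument is more self-contained and closes the gap the paper glosses over, at the (negligible) cost of relying on the intermediate estimate \eqref{28} rather than on the lemma statements alone. Two cosmetic points: since only $r(x)>p(x)-1$ is assumed, $r(x)$ may be below $1$, so the constant in $(u_{n}+1)^{r(x)}\leq 2^{r^{+}-1}\bigl(u_{n}^{r(x)}+1\bigr)$ should be $\max\{1,2^{r^{+}-1}\}$; and the Young constant $C(\varepsilon)$ must be uniform in $x$, which holds because $p(x)/q(x)=1+\frac{1-\gamma(x)}{r(x)}$ stays in a compact subinterval of $(1,\infty)$ by \eqref{1}, \eqref{3} and \eqref{4} -- worth one line in a final write-up.
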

\begin{proof}
Let $\varepsilon>$, by Lemma \ref{lem4}, we may choose $k=k_{\varepsilon}$ large enough such that
\begin{equation}
\label{34}
\int_{E\cap\{u_{n}> k\}}\vert\nabla u_{n}\vert^{q(x)}\leq\varepsilon.
\end{equation}
From the estimate \eqref{23} and that $q(x)<p(x)$, we deduce
\begin{equation}
\label{35}
\int_{E\cap\{u_{n}\leq k\}}\vert\nabla T_{k}(u_{n})\vert^{q(x)}\leq\varepsilon.
\end{equation}
By \eqref{34} and \eqref{35}, for any measurable subset $E$ in $\Omega$, we have
\begin{align*}
\int_{E}\vert\nabla u_{n}\vert^{q(x)}&=
\int_{E\cap\{u_{n}\leq k\}}\vert\nabla u_{n}\vert^{q(x)}
+\int_{E\cap\{u_{n}> k\}}\vert\nabla u_{n}\vert^{q(x)}\leq 2\varepsilon.
\end{align*}
Therefore, we deduce that $\vert\nabla u_{n}\vert^{q(x)}$ is equiintegrable in $L^{1}(\Omega)$. Thus \eqref{033} is proved.
\end{proof}
\section{Proof of the main theorem}
\par By Lemma \ref{lem3}, the sequence $(u_{n})_{n}$ is bounded in $W_{0}^{1,q(.)}(\Omega)$. Therefore, there exists a function $u\in W_{0}^{1,q(.)}(\Omega)$ such that (up to a subsequence)
\begin{equation}
   \label{00}
  \left\{\begin{array}{l}
       u_{n}\rightharpoonup u\quad
      \text{in}\; W_{0}^{1,q(.)}(\Omega),\\
      u_{n}\rightarrow u\quad \text{a.e. in} \;  \Omega.
   \end{array}\right.
\end{equation}
\begin{proposition}
\label{prop0}
If the sequence $T_{k}(u_{n})$ of the truncates of the solutions $u_{n}$ of \eqref{Pn} is bounded in $W_{0}^{1,p(.)}(\Omega)$.
Then
\begin{equation}
\label{36}
 T_{k}(u_{n})\rightarrow  T_{k}(u)\quad \mbox{strongly in }\; W_{0}^{1,p(.)}(\Omega),
\end{equation}
as $n\rightarrow\infty$, for every $k>0$. In particular $\nabla u_{n}\rightarrow \nabla u$ a.e. in $\Omega$.
\end{proposition}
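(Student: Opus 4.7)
\textbf{Proof plan for Proposition \ref{prop0}.} The plan is to follow the Boccardo--Murat strategy, adapted to the variable--exponent setting. By Lemma \ref{lem3}, the sequence $T_{k}(u_{n})$ is bounded in $W_{0}^{1,p(\cdot)}(\Omega)$, so up to a subsequence $T_{k}(u_{n})\rightharpoonup w_{k}$ weakly in $W_{0}^{1,p(\cdot)}(\Omega)$, with $w_{k}=T_{k}(u)$ identified via the pointwise convergence $u_{n}\to u$ a.e.\ from \eqref{00} and the continuity of $T_{k}$. The goal is to establish
\[
I_{n}:=\int_{\Omega}a(x)\bigl[|\nabla T_{k}(u_{n})|^{p(x)-2}\nabla T_{k}(u_{n})-|\nabla T_{k}(u)|^{p(x)-2}\nabla T_{k}(u)\bigr]\cdot\nabla\bigl(T_{k}(u_{n})-T_{k}(u)\bigr)\,dx \longrightarrow 0,
\]
since the standard Simon--type monotonicity inequality for the $p(x)$-Laplacian will then yield $\nabla T_{k}(u_{n})\to\nabla T_{k}(u)$ a.e., and Vitali's theorem (combined with the coercivity of $|\cdot|^{p(x)}$) will upgrade this to strong convergence in $W_{0}^{1,p(\cdot)}(\Omega)$.

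To produce $I_{n}\to 0$, I would insert $\varphi=T_{k}(u_{n})-T_{k}(u)\in W_{0}^{1,p(\cdot)}(\Omega)\cap L^{\infty}(\Omega)$ in the weak formulation of \eqref{Pn}. The subtracted piece $\int a(x)|\nabla T_{k}(u)|^{p(x)-2}\nabla T_{k}(u)\cdot\nabla(T_{k}(u_{n})-T_{k}(u))$ vanishes in the limit since $|\nabla T_{k}(u)|^{p(x)-2}\nabla T_{k}(u)\in L^{p'(\cdot)}(\Omega)$ is fixed while the gradient difference tends weakly to $0$ in $L^{p(\cdot)}(\Omega)$. The remaining principal--part contribution differs from $\int a(x)|\nabla u_{n}|^{p(x)-2}\nabla u_{n}\cdot\nabla(T_{k}(u_{n})-T_{k}(u))$ only by $\int_{\{u_{n}>k\}}a(x)|\nabla u_{n}|^{p(x)-2}\nabla u_{n}\cdot\nabla T_{k}(u)$, which one controls by writing $\nabla u_{n}=\nabla T_{2k}(u_{n})$ on $\{u\leq k\}\cap\{u_{n}\leq 2k\}$ (which eventually absorbs the relevant set since $u_{n}\to u$ a.e.), applying the variable--exponent Hölder inequality, and using that $\|\nabla T_{k}(u)\,\chi_{\{u\leq k\}\cap\{u_{n}>k\}}\|_{p(\cdot)}\to 0$ by dominated convergence.

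It then remains to prove that the right--hand side of the tested equation is $o(1)$. For the lower--order term $\int b(x)u_{n}^{r(x)}(T_{k}(u_{n})-T_{k}(u))$, Vitali's theorem applies: the factor $T_{k}(u_{n})-T_{k}(u)$ is uniformly bounded by $2k$ and converges a.e.\ to $0$, while $u_{n}^{r(x)}$ is equi--integrable by Lemma \ref{lem5}. The singular term $\int\frac{f_{n}(T_{k}(u_{n})-T_{k}(u))}{(u_{n}+1/n)^{\gamma(x)}}$ is the delicate part: the hard step will be handling the behaviour near $\partial\Omega$, where Lemma \ref{lem2} does not supply a uniform lower bound on $u_{n}$. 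I would split $\Omega=\omega\cup(\Omega\setminus\omega)$ with $\omega\subset\subset\Omega$; on $\omega$ the bound $u_{n}\geq c_{\omega}$ yields the dominating function $\tfrac{2kf}{c_{\omega}^{\gamma^{-}}}\in L^{1}(\omega)$ and dominated convergence applies, while on $\Omega\setminus\omega$ I would exploit the elementary estimate $\tfrac{T_{k}(u_{n})}{(u_{n}+1/n)^{\gamma(x)}}\leq (k+1)^{1-\gamma^{-}}$, the analogous bound with $T_{k}(u)$, and the absolute continuity of $\int f$ to make the contribution arbitrarily small uniformly in $n$.

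Once $I_{n}\to 0$ is established, the a.e.\ convergence $\nabla T_{k}(u_{n})\to\nabla T_{k}(u)$ follows from the strict monotonicity inequality for the $p(x)$-Laplacian (after passing to a further subsequence), and strong convergence in $W_{0}^{1,p(\cdot)}(\Omega)$ follows by Vitali once one verifies equi--integrability of $|\nabla T_{k}(u_{n})-\nabla T_{k}(u)|^{p(x)}$ via the same Simon--type inequality. Finally, the pointwise convergence $\nabla u_{n}\to\nabla u$ a.e.\ in $\Omega$ is obtained by a diagonal argument: at any Lebesgue point where $u_{n}\to u$, choosing $k>u+1$ gives $\nabla u_{n}=\nabla T_{k}(u_{n})$ for $n$ large, and then the $k$--level statement transfers to the full gradients.
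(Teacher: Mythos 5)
Your overall Minty-type scheme (show the monotonicity integral $I_{n}\to0$, deduce a.e.\ convergence of $\nabla T_{k}(u_{n})$, upgrade to strong convergence by Vitali) is close in spirit to the paper, but the decisive step — disposing of the singular right-hand side after testing with $T_{k}(u_{n})-T_{k}(u)$ — is where your argument has a genuine gap. On $\Omega\setminus\omega$ you invoke ``the analogous bound with $T_{k}(u)$'', i.e.\ a pointwise bound of $\frac{f_{n}\,T_{k}(u)}{(u_{n}+1/n)^{\gamma(x)}}$ by $C_{k}f$. No such bound holds: the denominator contains $u_{n}$, not $u$, and near $\partial\Omega$ there is no uniform (in $n$ and $x$) lower bound for $u_{n}+1/n$ in terms of $u$; at points where $u_{n}$ is still small but $u$ is of order $k$ the quotient is of order $k\,n^{\gamma}$, and the only $n$-independent majorant available, $f\,T_{k}(u)/u_{1}^{\gamma(x)}$, is not known to be integrable up to the boundary. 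So the claimed uniform smallness of the boundary contribution is unjustified. A similar problem affects your treatment of the principal-part correction $\int_{\{u_{n}>k\}}a(x)|\nabla u_{n}|^{p(x)-2}\nabla u_{n}\cdot\nabla T_{k}(u)$: on the residual set $\{u\le k\}\cap\{u_{n}>2k\}$ you only know that the measure tends to $0$, and the available bounds (boundedness in $W_{0}^{1,q(.)}(\Omega)$ with $q(\cdot)<p(\cdot)$, equi-integrability of $|\nabla u_{n}|^{q(x)}$ from Lemma \ref{lem6}) do not make $|\nabla u_{n}|^{p(x)-1}|\nabla T_{k}(u)|$ equi-integrable, so that piece is not controlled either.

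Both gaps disappear if you use what Lemma \ref{lem2} already gives: the sequence $u_{n}$ is increasing, hence $u_{n}\le u$ and $T_{k}(u_{n})\le T_{k}(u)$. This is precisely the paper's device: since the right-hand side $f_{n}(u_{n}+1/n)^{-\gamma(x)}\ge0$ and the test function $(T_{k}(u_{n})-T_{k}(u))\phi$ is nonpositive, the singular term can simply be dropped — no estimate near $\partial\Omega$ is needed at all — and moreover $\{u_{n}>k\}\cap\{u\le k\}$ is empty, so the correction term you struggle with vanishes identically; the remaining terms (the lower-order term via the equi-integrability of Lemma \ref{lem5}, the two weak-convergence terms) are handled essentially as you propose. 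Note also that the global statement in $W_{0}^{1,p(.)}(\Omega)$ (obtained in the paper by taking $\phi\equiv1$, admissible since $(T_{k}(u_{n})-T_{k}(u))\in W_{0}^{1,p(.)}(\Omega)\cap L^{\infty}(\Omega)$) really relies on this sign argument; a direct estimate of the singular term of the kind you attempt is exactly what the monotone construction of $u_{n}$ is designed to avoid.
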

\begin{proof}
By Lemma \ref{lem3} $T_{k}(u_{n})$ is bounded in $W_{loc}^{1,p(.)}(\Omega)$, it weakly converges in this space to its pointwise limit $T_{k}(u)$. Moreover, since $f_{n}\geq0$ and $u_{n}\geq0$ a.e., we have that
\begin{equation*}
\label{37}
-\mbox{div}(a(x)\vert \nabla u_{n}\vert^{p(x)-2}\nabla u_{n})+b(x)u_{n}^{r(x)}\geq0,
\end{equation*}
for all $n\in \mathbb{N}$ and $k>0$.
\par Now we fix $\phi\in C_{0}^{1}(\Omega)$ such that $0\leq \phi\leq1$ on $\Omega$ and such that $\phi\equiv1$ on a fixed subset $\omega$ of $\Omega$. Then, thanks to the monotonicity of the $p(x)-$laplacian operator, \eqref{5}, and that
$T_{k}(u_{n})\geq T_{k}(u)$ (since $u_{n}\rightarrow u\leq u_{n}$), we can conclude that the following hold
\begin{align}
\label{38}
&0<\beta\int_{\omega}\left(\vert \nabla T_{k}(u_{n})\vert^{p(x)-2}\nabla T_{k}(u_{n})-\vert \nabla T_{k}(u)\vert^{p(x)-2}\nabla T_{k}(u)\right).\nabla (T_{k}(u_{n})-T_{k}(u))\nonumber\\
&\qquad+\nu\int_{\omega}
u_{n}^{r(x)}(T_{k}(u_{n})-T_{k}(u))\nonumber\\
&=\beta\int_{\Omega}\left(\vert \nabla T_{k}(u_{n})\vert^{p(x)-2}\nabla T_{k}(u_{n})-\vert \nabla T_{k}(u)\vert^{p(x)-2}\nabla T_{k}(u)\right).\nabla (T_{k}(u_{n})-T_{k}(u))\phi\nonumber\\
&\qquad+\nu\int_{\Omega}
u_{n}^{r(x)}(T_{k}(u_{n})-T_{k}(u))\phi\nonumber\\
&=\beta\int_{\Omega}\vert \nabla T_{k}(u_{n})\nabla T_{k}(u_{n})\vert^{p(x)-2}\nabla[(T_{k}(u_{n})-T_{k}(u))\phi]\nonumber\\
&\qquad-\beta\int_{\Omega}\vert \nabla T_{k}(u_{n})\vert^{p(x)-2}\nabla T_{k}(u_{n}).\nabla\phi[T_{k}(u_{n})-T_{k}(u)]\nonumber\\
&\qquad-\beta\int_{\Omega}\vert \nabla T_{k}(u)\vert^{p(x)-2}\nabla T_{k}(u).\nabla(T_{k}(u_{n})-T_{k}(u))\phi\nonumber\\
&\qquad+\nu\int_{\Omega}
u_{n}^{r(x)}(T_{k}(u_{n})-T_{k}(u))\phi
\end{align}
 By Lemma \ref{lem5}, we obtain
$$ u_{n}^{r(x)}\rightarrow u^{r(x)}\quad \mbox{strongly in }\; L^{1}(\Omega).$$
Therefore, since $T_{k}(u_{n})$ strongly converges to $T_{k}(u)$ in $L^{p(.)}(\Omega)$ (Lemma \ref{lem3}), we have
\begin{equation}
\label{39}
\int_{\Omega}
u_{n}^{r(x)}(T_{k}(u_{n})-T_{k}(u))\phi\rightarrow0,\quad \mbox{as}\;\; n\rightarrow\infty.
\end{equation}
We can easily know the fact that $\vert \nabla T_{k}(u)\vert^{p(x)-2}\nabla T_{k}(u)\in L_{loc}^{p'(.)}(\Omega)$, and $\nabla(T_{k}(u_{n})-T_{k}(u))\phi$ tends to zero weakly in $L^{p}(\Omega)$, we get
\begin{equation}
\label{40}
\int_{\Omega}\vert \nabla T_{k}(u)\vert^{p(x)-2}\nabla T_{k}(u).\nabla(T_{k}(u_{n})-T_{k}(u))\phi\rightarrow0,\quad \mbox{as}\;\; n\rightarrow\infty.
\end{equation}
$\nabla\phi[T_{k}(u_{n})-T_{k}(u)]$ strongly converges to zero in $L^{p(.)}(\Omega)$. Thus
\begin{equation}
\label{41}
\int_{\Omega}\vert \nabla T_{k}(u_{n})\vert^{p(x)-2}\nabla T_{k}(u_{n}).\nabla\phi[T_{k}(u_{n})-T_{k}(u)]\rightarrow0,\quad \mbox{as}\;\; n\rightarrow\infty.
\end{equation}
From \eqref{38}-\eqref{41}, we have
$$\int_{\omega}\left(\vert \nabla T_{k}(u_{n})\vert^{p(x)-2}\nabla T_{k}(u_{n})-\vert \nabla T_{k}(u)\vert^{p(x)-2}\nabla T_{k}(u)\right).\nabla (T_{k}(u_{n})-T_{k}(u))\rightarrow0,$$
then $T_{k}(u_{n})$ strongly converges to $T_{k}(u)$ in $W_{0}^{1,p(.)}(\omega)$ for all $k>0$,
i.e., since $\omega$ is arbitrary, that $T_{k}(u_{n})$ strongly converges to $T_{k}(u)$ in$W_{loc}^{1,p(.)}(\Omega)$.
\par
 Choosing $\phi\equiv1$ and repeating the same proof, we obtain that $T_{k}(u_{n})$ strongly converges to $T_{k}(u)$ in $W_{0}^{1,p(.)}(\Omega)$, then $\nabla u_{n}\rightarrow \nabla u$ a.e. in $\Omega$.
\end{proof}
\begin{proof}[Proof of the Theorem \ref{thm0}]
It is easy to pass to the limit in the right hand side of problems
\eqref{Pn}. On the other hand, using Lemma \ref{lem2}, we have
$$0\leq \bigg\vert \frac{f_{n}\varphi}{\left(u_{n}+\frac{1}{n}\right)^{\gamma(x)}}\bigg\vert\leq \frac{\Vert \varphi\Vert_{\infty}}{c_{\omega}^{\gamma^{-}}}f,$$
for every $\varphi\in C_{0}^{1}(\Omega)$, using Lebesgue Theorem and \eqref{00},  we conclude that
\begin{equation}
\label{42}
\lim\limits_{n\rightarrow\infty}
\int_{\Omega}\frac{f_{n}\varphi}{\left(u_{n}+\frac{1}{n}\right)^{\gamma(x)}}=\int_{\Omega}
\frac{f\varphi}{u^{\gamma(x)}}.
\end{equation}
By the same argument, we get
\begin{equation}
\label{43}
\lim\limits_{n\rightarrow\infty}
\int_{\Omega}b(x) u_{n}^{r(x)}\varphi=\int_{\Omega}
u^{r(x)}\varphi.
\end{equation}
For the first term, by Proposition \ref{prop0} we have that
$$a(x)\vert \nabla u_{n}\vert^{p(x)-2}\nabla u_{n} \rightarrow a(x)\vert \nabla u\vert^{p(x)-2}\nabla u\quad \mbox{a.e. in}\; \Omega,$$
furthermore $a(x)\vert \nabla u_{n}\vert^{p(x)-2}\nabla u_{n}$ is majorette by $\beta \vert \nabla u_{n}\vert^{p(x)-1}$. Observe that $p(x)-1<q(x)$, by Lemma \ref{lem6} and Vitali's Theorem, we have
\begin{equation}
\label{44}
\lim\limits_{n\rightarrow\infty}
a(x)\vert \nabla u_{n}\vert^{p(x)-2}\nabla u_{n}.\nabla\varphi=\int_{\Omega}
a(x)\vert \nabla u\vert^{p(x)-2}\nabla u.\nabla\varphi.
\end{equation}
Hence from \eqref{42}-\eqref{43} we can deduce \eqref{6}.
\end{proof}


\end{document}